\def \Re {\mathbb{R}}
\DeclarePairedDelimiter\floor{\lfloor}{\rfloor}
\newtheorem{theorem}{Theorem}[section]
\newtheorem{example}{Example}[section]
\newtheorem{proposition}[theorem]{Proposition}
\newtheorem{remark}[theorem]{Remark}
\title{A Finite Elements Strategy for Spread Contract Valuation Via  Associated  PIDE }
\author{Pablo Olivares and Ciro Diaz}
\begin{document}

\maketitle

\begin{abstract}
We study an efficient strategy based on finite elements to value spread options on commodities whose underlying assets follow a dynamic described by a certain class of two-dimensional Levy models by  solving  their associated  partial integro-differential equation (PIDE). To this end we consider a Galerkin approximation in space along with an implicit $\theta$-scheme for time evolution. Diffusion and drift in the associated operator are discretized using an exact Gaussian quadrature, while the integral part corresponding to jumps is approximated using the \textit{symbol method} introduced in \cite{gass2018flexible}. A system with blocked Toeplitz with Toeplitz blocks (BTTB) matrix is efficiently solved via biconjugate stabilized gradient method (BICSTAB)  with a circulant pre-conditioner at each time step. The technique is applied to the pricing of \textit{crack} spread options between the prices of futures RBOB gasoline (reformulated blendstock for oxygenate blending) and West Texas Intermediate(WTI) oil in NYMEX.

\end{abstract}

\section{Introduction}

Spreads contracts on commodities are the difference
between the finished product and the raw material used 
in its production. Examples of these contracts are  \textit{cracks}, based  on the difference between the 
price of gasoline  and oil, \textit{sparks}, based on 
the difference between electricity and natural gas prices and \textit{crushs} based on the difference between soy oil and soy beans. Typically, manufactures look at the difference between a long position on the raw material  and a short position on the resulting product.
\par
The goal of this paper is to investigate a pricing approach following the numerical solution of the  PIDE associated to option prices driven by two-dimensional Levy models, using finite elements. Its original contribution consists in an efficient strategy that combines 
 a Galerkin discretization in space via the symbol method and BICSTAB along with a circular preconditioner to efficiently solve 
 the associated linear system with non-symmetric and densely populated BTTB matrix involved in the implicit time step algorithm. Additionally, we present a practical application of this strategy to  specific \textit{crack} contracts. The paper provides numerical experiments that validate the good performance of this strategy  within the framework of the proposed models. 
 \par
 Levy models offer a more realistic approach to the dynamic of the underlying assets involved in the contract as they allow to capture rapid changes in their movement via random jumps. We propose two types of bivariate stochastic models. The first one is based on a combination of a  diffusion and random jumps. In turn, the jumps are divided into idiosyncratic and common ones, each with its specific intensity and length. It corresponds to market information that may impact both or each one separately. The model has been introduced in  \cite{oliv1}. The second model belongs to the subclass of time-changed Levy models, where the time index is consider as a non-decreasing process called subordinator. Its extension to a multivariate setting has been consider in \cite{LUCIANO20101937, 6c22611064594892998979f2779d599c}. In both models the dependence between assets is captured in a tractable way by the bivariate Brownian motion and the common jump length in the first case and an additional common subordinator in the second.
 \par
For exponential Levy models or a jump-diffusion models, the value of an European option can be obtained as the classical solution of the related Kolmogorov equation where the corresponding operator is of
partial integro-differential type. For this notion of solution, Fast Fourier Transform (FFT) based methods have been proposed in \cite{jaimungal2009stepping,lord2008fast} which are quite easy to implement, flexible and generic although they do not handle boundary conditions efficiently and are only applicable to linear problems with non space-dependent coefficients. 
Other possible notions  of solution are viscosity solutions
\cite{tankov2011pricing,alvarez1996viscosity,barles1997backward,pham1998optimal} to be approximated using finite
difference methods and variational solutions
\cite{gass2018flexible,reich2010kolmogorov,matache2004fast}
to be approximated using variational methods. In
\cite{eberlein2014variational,glau2016feynman}, important
theoretical advances are presented to establish the
connection between variational solutions and the corresponding
expectations that European option prices represent. 
The resulting relation is a Feynman-Kac-type representation
of the variational solution as a conditional expectation
providing a theoretical framework that  covers  a wide range
of Levy models. With these results, the use of the Galerkin method to approximate variational solutions acquires special relevance. 
\par
Important features of the Galerkin method are its solid theoretical background an its flexibility to choose approximation spaces and basis to address the particularities of the problem to be solved. For example,
specific finite element spaces can be constructed to handle
complicated domain geometries, non trivial boundary conditions
or to capture different degrees smoothness of the solution.
Even so, when applying the Galerkin method to PIDEs some
difficulties appear. The matrix associated to the 
operator is full, which involves two fundamental questions: \textit{how to construct it?} and (if implicit methods for time evolution are to be considered)  \textit{how to solve the
companion linear system ?}. High order algorithms for time evolution are highly
desirable since computational effort is substantially 
lower. By far the favorite approach in pricing has been the use of $\theta$-schemes. They are  easy to implement and offer flexibility to obtain  implicit algorithms allowing for larger stability regions and thus larger time steps. 
In particular the choice $\theta=1/2$ leads to the Crank-Nicholson scheme which is second order accurate and
unconditionally stable. Some wavelets-Galerkin methods has been proposed to avoid dealing with full matrices \cite{von2003wavelet,matache2005wavelet,schwab2007computational}. By using this methods, it is possible to construct a
compressed \textit{sparse}  matrix to substitute the
original full one. The compressed matrix is also
structured and only a few of its entries need to be 
computed leading to tractable linear systems. 

Recently, Gass and  Glau presented in 
\cite{gass2018flexible} a flexible Galerkin
scheme for option pricing in multidimensional Levy models,
the authors developed an approach to set up a FEM solver for option prices in Levy models that they call it the \textit{symbol method}. They show a way to implement a 
Galerkin discretization of the integro-differential operator whenever its symbol is known and the approximated solution is approximated as the linear combination  of the smooth mother basis function's displacements. By taking this approach the stiffness matrix in one dimension is Toeplitz and only its first line and first column are needed to determine it. There exist direct methods to solve a Toeplitz system in $O(N\log^2N)$ operations \cite{ammar1988superfast,bitmead1980asymptotically,bitmead1980asymptotically,de1987new,delsarte1985generalization,freund1994look} and iterative methods to solve it in $O(N\log(N))$  operations  \cite{strang1986proposal,olkin1986linear}. 
Implementing the symbol method in two dimensions lead us 
to a BTTB stiffness matrix, an introduction to iterative solvers for  Toeplitz and BTTB symmetric systems can be found in \cite{chan2007introduction}.
 
 Inspired by the use of the
Preconditioned Conjugated gradient (PCG) method  for
symmetric Toeplitz and BTTB systems covered in
\cite{chan2007introduction} we propose the use of the
Preconditioned BICGSATB (Krilov) method introduced in
\cite{van1992bi}, which works for general (non-symmetric)
matrices, along with a circular preconditioner.  As
mentioned in \cite{gass2018flexible}, the pricing error
is sensitive to the accuracy of the stiffness matrix 
entries and the authors emphasized the necessity of
accurately compute them. Pointing in that direction, 
we discretize the terms in  the integro-differential
operator not related to the pure jump
using exact Gaussian quadrature (as it is suggested in
\cite{hilber2009numerical} for the case  of classical FEM)
and suppress their corresponding terms in the symbol. 
By doing so, some amount of error is avoided since
contribution of the diffusion and the drift to the 
stiffness matrix will be exact.
\par
The paper is organized in the following way: In Section
\ref{sec:Models} we introduce the main notations and the
models to be considered based on two subclasses of 
exponential Levy models, together with their symbols and 
their associated PIDEs.  Section \ref{sec:Galerkin} is devoted
to Galerkin method for parabolic problems, the
variational formulation and discretization of the PIDE will 
be described as well as the iterative method to solve the 
BTTB system and the $\theta$-scheme. In Section \ref{sec:Numerical} we discuss the specific implementation of the method for the two proposed models, estimation problems and the numerical pricing results for \textit{crack} contracts.

\section{Notations and models }
\label{sec:Models}

For a Levy process $(X_t)_{t \geq 0}$   let the functions $\varphi_{X_{t}}(u)$ and $\Psi_{X}(u)=\frac{1}{t} \log \varphi_{X_{t}}(u)$ be their  characteristic function 
and characteristic exponent respectively. Random elements
are defined in a filtered space $(\Omega, \mathcal{A}, P,
(\mathcal{F}_t))_{t \geq 0})$ completed in the usual way.
For  row vectors $a$ and $b$ the product $a b$ refers to
its component-wise product while scalar product is 
denoted $a^Tb$, where  $a^T$ is its transpose. The 
symbol $C^{1,2}(\mathbb{R}^2)$  denote respectively the
set of functions with first and second order continuous
derivatives . We define $a_+:=\max(a,0)$.
(Triplet notation) The triplet of a Levy process is
denoted by $(b, \Sigma, \nu)$ where b , $\Sigma$ are the drift and the diffusion coefficients respectively, while $\nu$ is the Levy measure

Let  $(S_t)_{t \geq 0}=(S^{(1)}_t,S^{(2)}_t)_{t \geq 0}$ be a two dimensional stochastic process whose components represent the prices of two underlying assets, e.g. a commodity and the raw material to produce it at time $t$, and let
$(Y_t)_{t \geq 0}=(Y_t^{(1)},Y_t^{(2)})_{t \geq 0}$ be the corresponding log-return process. Both processes are related by
\begin{equation}
    S_t= S_0 \exp(Y_t)=\exp(x_0+Y_t)
    \label{eq:pricesexplevy}
\end{equation}
where $x_0=(x^{(1)}_0, x^{(2)}_0)$ and $x^{(j)}_0=\log S^{(j)}_0,\; j=1,2$. For simplicity we do not include a
mean-reversion dynamic which has been reported in
commodity movements, but the method above can be adapted
to this more realistic situation.

 The absence of arbitrage imposes the discounted underlying prices $(e^{-rt} S_t)_{t \geq 0}$ to be a $\mathcal{Q}$-martingale in each dimension under a given equivalent martingale measure  $\mathcal{Q}$ (EMM for short) which, for a Levy process,  translates to  restrictions on its triplet.
 
 A  European type spread call contract with maturity at time $T$ and strike price $K$ is a contingent claim that with a payoff  given by:
\begin{equation*}
    h(S_T)=\left( S^{(2)}_T- c S^{(1)}_T -K\right)_+
    \label{eq:spr}
\end{equation*}
where  the value $c>0$ is a conversion factor.
Commonly, contracts are written on  future prices on both assets.
Again, for simplicity we focus on the case of spot prices. The price of a  spread contract at time $t$ with initial price $S_t=x$ is denoted as $C(t,x)$ and is given by:
\begin{equation}
\begin{aligned}
C(t,x) &=e^{-r(T-t)}E_{\mathcal{Q}}\left[ h(S_t) \, | \, S_t = x\right]
\end{aligned}
\label{eq:spr2}
\end{equation}
Next, we consider specific exponential Levy models  that reflect finite and infinite jump activity in the log-prices.

\subsection{Jump-diffusion exponential models}

We look at a bivariate jump-diffusion process with common 
and idiosyncratic jumps. Such models have been studied in  \cite{oliv1}  by considering two  Poisson compound processes
with a dependence between random sizes in the common jumps.
To this end, we define two sequences of independent and identically distributed  two-dimensional random  vectors $(X_k)_{k \in \mathbb{N}}$ 
and $(X_{0,k})_{k \in \mathbb{N}}$. The   components  of the vectors in the first  sequence  are independent among them, with identical cumulative distribution function (c.d.f.) $F_{X}:=(F^{(1)}(x_1),F^{(2)}(x_2))$  and independent also from the components of vectors  $X_{0,k}$. On the other hand, the vectors $X_{0,k}=(X_{0,k}^{(1)},X_{0,k}^{(2)})_{k \in \mathbb{N}}$ have joint cumulative distribution function   $F_{X_{0}}(x)$ for every $k \in \mathbb{N}$.

Next, we introduce the bivariate homogeneous compound Poisson process  $(Z_t)_{t \geq 0}=(Z_t^{(1)},Z_t^{(2)})_{ t \geq 0}$ with components:
\begin{equation}\label{eq:jumeq}
    Z^{(j)}_t= Z^{(1,j)}_t+Z^{(0,j)}_t=\sum_{k=1}^{N^{(j)}_t} X^{(j)}_k + \sum_{k=1}^{N^{(0)}_t} X_{0,k}^{(j)},
\end{equation}
for $j=1,2$, where $(N_t)_{t \geq 0}=(  N^{(1)}_t,N^{(2)}_t)_{t \geq 0}$ is a pair of independent
Poisson processes with respective  intensities $\lambda_j>0$,
$j=1,2$, also independent of the Poisson process
$(N^{(0)}_t)_{t \geq 0}$, the latter with intensity
$\lambda_0>0$. The processes $(N^{(1)}_t)_{t \geq 0}$ and
$(N^{(2)}_t)_{t \geq 0}$  are the number of
common and idiosyncratic jumps of the underlying asset 
prices on the interval $[0,t]$, with corresponding jump 
sizes $X_{0,k}^{(j)}$ and $X^{(j)}_k$. We assume the 
existence of moments up to order two of the later.
Furthermore, we set the log-prices as jump-diffusion 
processes given by a combination of a Brownian motion and 
a compound Poisson process as:
\begin{equation}\label{eq:jdv}
  Y_t=  B_t +  Z_t 
\end{equation}
where $B_t=(B^{(1)}_t, B^{(2)}_t)$ is a pair of   Brownian motions with covariance matrix $\Sigma_B$ with
\begin{equation*}
\Sigma_B= 
\begin{pmatrix}
\sigma_1^2 & \rho_B\sigma_1\sigma_2 \\
\rho_B \sigma_1\sigma_2 & \sigma_2^2
\end{pmatrix}
\end{equation*}
The process $(Y_t)_{t \geq 0}$ has triplet $(b, \Sigma_B, \nu)$ with 
\begin{equation}
    b_j=\lambda_j E\left[X^{(j)}_k\right]+\lambda_0 E\left[X_{0,k}^{(j)}\right]
\end{equation}
and a Levy jump measure $\nu$  on $\mathbb{R}^2$ given by:
\begin{equation*}
    \nu(\text{d}x)  = \lambda_1 F_{X^{(1)}}(\text{d}x_1)  \delta_0(\text{d}x_2) +\lambda_2 \delta_0(\text{d}x_1)  F_{X^{(2)}}(\text{d}x_2)  
  + \lambda_0 F_{X_0}(dx)
\end{equation*}
The characteristic function of the log-price process is  the product of the characteristic functions of processes $Z_t$ and $B_t$. Indeed, for the compound Poisson part is easily seen that
\begin{eqnarray*}
\varphi_{Z_t}(u)&=&  \prod_{j=1}^2 E_{\mathcal{Q}}
\left[E_{\mathcal{Q}} \left[\exp\left(iu_j
\sum_{k=1}^{N_t^{(j)}}X^{(j)}_k\right)\,\bigg|\,N_t^{(j)}
\right]\right]E_{\mathcal{Q}}
\left[E_{\mathcal{Q}}\left[\exp\left(iu\sum_{k=1}^{N_t^{(0)}}X_{0,k}\right)\,\bigg|\,N_t^{(0)}\right]\right]\\
&=& \prod_{j=1}^2 \exp \left(t \lambda_j\left(\varphi_{ X^{(j)}}(u_j)-1\right)\right) \exp\left(t \lambda_0 \left(\varphi_{ X^{(0)}}(u_j)-1\right)\right)\\
&=&  \exp\left(\left( \lambda_0 (\varphi_{X_0}(u)-1)+  \sum_{j=1}^2 \lambda_j (\varphi_{ X^{(j)}}(u_j)-1) \right)t \right)
\end{eqnarray*}
where $\varphi_{X_{0,j}}$ and $\varphi_{ X^{(j)}}$ are the respective characteristic functions of common and  idiosyncratic jump sizes of the j-th asset. Hence:
\begin{eqnarray}
  \varphi_{Y_t}(u) &=& \exp \left[  \left( i u b + \frac{1}{2}u \Sigma_B u^T +  \sum_{j=1}^2 \lambda_j (\varphi_{ X^{(j)}}(u_j)-1) + \lambda_0 (\varphi_{X_0}(u)-1)\right) t \right]
  \label{eq:chfcpp}
\end{eqnarray}
Under $\mathcal{Q}$, the characteristic exponent of the log-prices verifies $\Psi^{\mathcal{Q}}_{Y^{(j)}}(-i)=r$. Therefore, the drift changes to
\begin{equation*}
\begin{aligned}
b^{\mathcal{Q}}_j &= r-\frac{1}{2}\sigma^2_j - \left( \lambda_j(\varphi_{X^{(j)}}(-i)-1) +
\lambda_2(\varphi_{X^{(2)}}(0)-1) +  \lambda_0 (\varphi_{X_{0,j}}(-i)-1)\right)
\end{aligned}
\end{equation*}
\begin{example} \label{Ex:1}(\textbf{Double Merton model with common and idiosyncratic jumps})

Consider a two dimensional jump-diffusion model given by equation (\ref{eq:pricesexplevy}),(\ref{eq:jumeq}) and (\ref{eq:jdv}) with  jump sizes following Gaussian distributions. More specifically, for any $k \in \mathbb{N}$ we    $X_k \sim N(\mu_J,D_J)$, where $D_J$ is a  diagonal matrix with components  $D_J(j,l)= \delta_{jl}(\sigma^{(j)}_J)^2$, $\mu_J=(\mu^{(1)}_{J}, \mu^{(2)}_{J})$ and $X_{0,k} \sim N(\mu_{0,J},\Sigma_{0,J})$, such that:
\begin{equation*}
    \Sigma_{0,J}=\begin{pmatrix}
\sigma^2_{0,1} & \rho_{J} \sigma_{0,1}\sigma_{0,2}\\
 \rho_{J} \sigma_{0,1}\sigma_{0,2} & \sigma^2_{0,2}
\end{pmatrix}
\end{equation*}
Here $\delta_{jl}$ is the usual Kronecker's number. In this case
\begin{eqnarray*}
\varphi_{X_0}(u) &=& \exp(-\frac{1}{2}u \Sigma_{0,J} u^T+ i  \mu_{0,J} u^T) \\
\varphi_{ X^{(j)}}(u_j) &=& \exp(-\frac{1}{2}  (\sigma_J^{(j)})^2 u^2_j + i\mu^{(j)}_J u_j) 
\end{eqnarray*}
By equation \eqref{eq:chfcpp} the characteristic exponent of the log-prices is
\begin{eqnarray} \label{eq:chfcppm}
  \Psi^\mathcal{Q}_{Y}(u) &=& i u b^{\mathcal{Q}} + \frac{1}{2}u \Sigma_B u^T +  \sum_{j=1}^2 \lambda_j (\varphi_{ X^{(j)}}(u_j)-1) + \lambda_0 (\varphi_{X_0}(u)-1)
 \end{eqnarray}
 where the drift  across each dimension in the risk-neutral measure takes the form
\begin{equation*}
  b^{\mathcal{Q}}_j= r- \frac{1}{2}\sigma^2_j-\lambda_j(\exp(\mu^{(j)}_J+ \frac{1}{2} (\sigma^{(j)}_J)^2)-1) - \lambda_0(\exp(\mu^{(j)}_{0,J} + \frac{1}{2} (\sigma^2_{0,j})-1)
\end{equation*}
\end{example}
\subsection{Time-changed exponential Levy models}
We consider now  the class of exponential bivariate time-changed Levy models, with the random time described  according to  a two dimensional subordinator of one-factor type. See    \cite{LUCIANO20101937, 6c22611064594892998979f2779d599c} for a general formulation of one-factor multidimensional time-changed processes. The dependence between both underlying assets is given through  the common  time-changed subordinator. In this context, we define the log-price process as:
 \begin{equation}\label{eq:tchan}
  Y_t= \mu R_t+\sigma_r B_{R_t}
 \end{equation}
 with
\begin{equation*}
    R^{(j)}_t=L_t^{(0)}+d_j L_t^{(j)},\;j=1,2.
    \label{eq:subor}
\end{equation*}
where  $(L_t)_{t \geq 0}=( L_t^{(1)}, L_t^{(2)})_{t \geq 0}$ is a vector of  subordinator Levy models starting at zero, with independent components and finite second moments.  As in the previous subsection the process  $(B_{t})_{t \geq 0}$ is a bivariate standard Brownian motion but with independent components. The pairs $\mu$, $\sigma_r=(\sigma_{1,r},\sigma_{2,r})$ and $d$ are parameters of the model. 

Next, we provide an expression for the characteristic function of the log-price process  below. It is a simple extension of well-known  results in the one-dimensional case.
 \begin{proposition}\label{chfcp}
Let the process $(Y_t)_{t \geq 0}$ be given by equations (\ref{eq:pricesexplevy}) and (\ref{eq:tchan}). For $t \geq 0$ and $u \in \mathbb{R}^2$ we have:
\begin{align}
 \varphi_{Y_t}(u) &= \varphi_{L^{(0)}_t}\bigg(u \mu +\frac{1}{2}i \sum_{j=1}^2 \sigma^2_{j,r} u^2_j\bigg)\varphi_{L^{(1)}_t}\left(u_1 \mu_1 d_1+\frac{1}{2}i \sigma^2_{1,r} u^2_1 d_1 \right) 
 \varphi_{L^{(2)}_t}\left(u_2 \mu_2 d_2+\frac{1}{2}i \sigma^2_{2,r} u^2_2 d_2 \right) \label{eq:chftch1} \\ 
 \Psi_{Y}(u) &= \Psi_{L^{(0)}}\bigg(u \mu +\frac{1}{2}i \sum_{j=1}^2 \sigma^2_{j,r} u^2_j\bigg)+\Psi_{L^{(1)}}\left(u_1 \mu_1 d_1+\frac{1}{2}i \sigma^2_{1,r} u^2_1 d_1 \right)+ \Psi_{L^{(2)}}\bigg(u_2 \mu_2 d_2+\frac{1}{2}i \sigma^2_{2,r} u^2_2 d_2 \bigg) \label{eq:chftch2}
\end{align}
\end{proposition}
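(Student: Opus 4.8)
The plan is to evaluate $\varphi_{Y_t}(u)=E_{\mathcal{Q}}\big[\exp(i\sum_{j=1}^2 u_j Y_t^{(j)})\big]$ by conditioning on the $\sigma$-algebra generated by the three subordinators, exploiting the conditional Gaussianity of the subordinated Brownian motion, and then factorizing the surviving expectation through the mutual independence of $L^{(0)}$, $L^{(1)}$ and $L^{(2)}$ built into the one-factor construction; formula \eqref{eq:chftch2} will then drop out by taking $t^{-1}\log$ of \eqref{eq:chftch1}.

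First I would write $Y_t^{(j)}=\mu_j R_t^{(j)}+\sigma_{j,r}B^{(j)}_{R_t^{(j)}}$ with $R_t^{(j)}=L_t^{(0)}+d_j L_t^{(j)}$, and let $\mathcal{G}$ be the $\sigma$-algebra generated by the processes $L^{(0)},L^{(1)},L^{(2)}$. Using the standing assumptions that $B^{(1)},B^{(2)}$ are independent and independent of $\mathcal{G}$, conditionally on $\mathcal{G}$ the times $R_t^{(j)}$ are deterministic and $B^{(j)}_{R_t^{(j)}}\sim N(0,R_t^{(j)})$, the two components remaining independent. Hence, by the tower property and the Gaussian characteristic function,
\begin{equation*}
E_{\mathcal{Q}}\big[\, e^{i\sum_{j=1}^2 u_j Y_t^{(j)}}\ \big|\ \mathcal{G}\,\big]=\prod_{j=1}^2 e^{\,i u_j\mu_j R_t^{(j)}-\frac12\sigma_{j,r}^2 u_j^2 R_t^{(j)}}=\exp\!\Big(\sum_{j=1}^2\big(i u_j\mu_j-\tfrac12\sigma_{j,r}^2 u_j^2\big)R_t^{(j)}\Big).
\end{equation*}

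Next I would substitute $R_t^{(j)}=L_t^{(0)}+d_j L_t^{(j)}$ and regroup the exponent by subordinator, rewriting the conditional expectation as $\exp(w_0 L_t^{(0)})\prod_{j=1}^2\exp(d_j w_j L_t^{(j)})$ with $w_0:=\sum_{j=1}^2(i u_j\mu_j-\frac12\sigma_{j,r}^2 u_j^2)$ and $w_j:=i u_j\mu_j-\frac12\sigma_{j,r}^2 u_j^2$. Taking $E_{\mathcal{Q}}$ and using the independence of $L^{(0)},L^{(1)},L^{(2)}$ factorizes $\varphi_{Y_t}(u)$ into a product of three terms of the form $E_{\mathcal{Q}}[e^{z L_t}]$. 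Since a subordinator is non-negative and non-decreasing, $|e^{zL_t}|\le 1$ whenever $\mathrm{Re}\,z\le 0$, so $E_{\mathcal{Q}}[e^{zL_t}]<\infty$ and equals the analytic continuation $\varphi_{L_t}(-iz)$ of the characteristic function; here $\mathrm{Re}\,w_0=-\frac12\sum_j\sigma_{j,r}^2 u_j^2\le 0$ and $\mathrm{Re}(d_j w_j)=-\frac12 d_j\sigma_{j,r}^2 u_j^2\le 0$ (using $d_j\ge 0$), so all three factors are well defined. Computing $-iw_0$ and $-id_j w_j$ produces exactly the arguments of $\varphi_{L^{(0)}_t},\varphi_{L^{(1)}_t},\varphi_{L^{(2)}_t}$ appearing in \eqref{eq:chftch1}, which proves it; and since each $L^{(\cdot)}$ is a Lévy process we have $\varphi_{L_t}(\cdot)=\exp(t\,\Psi_{L}(\cdot))$, so applying $t^{-1}\log$ to \eqref{eq:chftch1} — the branch being fixed by continuity from $u=0$ — yields \eqref{eq:chftch2}.

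The computation itself is routine; the points that deserve care are the analytic (Laplace-transform) extension of the subordinator characteristic functions to complex arguments with non-negative imaginary part, which is where non-negativity of the $L^{(\cdot)}$ and $d_j\ge 0$ are genuinely used, and the bookkeeping of which random variables are frozen under the conditional expectation, together with making explicit the independence of the Brownian motion from the subordinators that is implicit in the time-changed construction.
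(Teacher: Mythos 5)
Your proof is correct and follows essentially the same route as the paper's: condition on the subordinators, use the conditional Gaussian characteristic function of $B_{R_t}$, regroup the exponent by subordinator, and factorize via the independence of $L^{(0)},L^{(1)},L^{(2)}$. The only difference is that you explicitly justify evaluating $\varphi_{L_t}$ at complex arguments via the Laplace-transform extension (using non-negativity of the subordinators and $d_j\ge 0$), a point the paper leaves implicit when it passes from $E_{\mathcal{Q}}[e^{zL_t}]$ to the stated formulas.
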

\begin{proof}
Let the quantity $L^{(0)}_t$ represent either the one-dimensional process or the pair $(L^{(0)}_t, L^{(0)}_t)$. For $u \in \mathbb{R}^2$
\begin{equation}
\begin{aligned}
 \varphi_{Y_t}(u) &= E_{\mathcal{Q}} \left[\exp\left(i u Y_t\right)\right]  = E_{\mathcal{Q}} \left[\exp(i u (\mu R_t+ \sigma_r B_{R_t}))\right]\\ 
 &= E_{\mathcal{Q}}\left[E_{\mathcal{Q}} \left[\exp(i u (\mu R_t+ \sigma_r B_{R_t}))\,\big|\,L^{(0)}_t, L_t\right]\right]\\ 
 &= E_{\mathcal{Q}}\left[E_{\mathcal{Q}} \left[\exp(i u (\mu  (L_t^{(0)}+d L_t)+ \sigma_r B_{L_t^{(0)}+d L_t})\,\big|\,L^{(0)}_t, L_t\right]\right]\\ 
 &= E_{\mathcal{Q}}\left[ \exp(i u (\mu  (L_t^{(0)}+d L_t))) E_{\mathcal{Q}} \left[\exp( i\sigma_r u B_{L_t^{(0)}+d L_t})\,\big|\,L^{(0)}_t, L_t)\right]\right] 
 \end{aligned}
 \label{eq:chfyt}
\end{equation}
But:
\begin{eqnarray*}
 &&E_{\mathcal{Q}} \left[ \exp\left(i \sigma_r u B_{L_t^{(0)}+d L_t}\right) \big|\, L^{(0)}_t, L_t\right]= \exp\left(-\frac{1}{2}\sum_{j=1}^2 \sigma^2_{j,r} u^2_j \left(L_t^{(0)}+d_j L^{(j)}_t\right)\right)\\
  &=& \exp\left(-\frac{1}{2}\left(\sum_{j=1}^2 \sigma^2_{j,r} u^2_j L_t^{(0)}+\sum_{j=1}^2 \sigma^2_{j,r} u^2_j d_j L^{(j)}_t\right)\right)
 \end{eqnarray*}
After substituting the expression above into  equation (\ref{eq:chfyt}), from the independence of the subordinator processes, we have
\begin{equation*}
\begin{aligned}
 \varphi_{Y_t}(u) &=  E_{\mathcal{Q}}\left[ \exp\left(i u \mu  (L_t^{(0)}+d L_t)\right) \exp\left(-\frac{1}{2}\left(\sum_{j=1}^2 \sigma^2_{j,r} u^2_j L_t^{(0)}+\sum_{j=1}^2 \sigma^2_{j,r} u^2_j d_j L^{(j)}_t\right)\right)\right]\\
 &=  E_{\mathcal{Q}}\left[ \exp\left(i u \mu L_t^{(0)}-\frac{1}{2}\sum_{j=1}^2 \sigma^2_{j,r} u^2_j L_t^{(0)}\right) \exp\left(-\frac{1}{2}\sum_{j=1}^2 \sigma^2_{j,r} u^2_j d_j L^{(j)}_t+i u \mu d L_t\right)\right]\\
 &=  E_{\mathcal{Q}}\left[ \exp\left(\left(i u \mu -\frac{1}{2} \sum_{j=1}^2 \sigma^2_{j,2} u^2_j\right) L_t^{(0)}\right)\right] E_{\mathcal{Q}}\left[\exp\left(\left(-\frac{1}{2}\sigma^2_{1,r} u^2_1 d_1 +i u_1 \mu_1 d_1\right) L^{(1)}_t\right)\right]\\
 &= E_{\mathcal{Q}}\left[\exp\left(\left(-\frac{1}{2}\sigma^2_{2,r} u^2_2 d_2 +i u_2 \mu_2 d_2\right) L^{(2)}_t\right)\right]
 \label{eq:Symbol}
 \end{aligned}
 \end{equation*}
which immediately leads to equations (\ref{eq:chftch1}) and (\ref{eq:chftch2}).
\end{proof}
Moments of the log-return process $(\Delta Y_t)_{t \geq 0}$, needed for the parameter estimation in section 4, can be obtained by conditioning on the subordinator process. The result is stated below.
\begin{proposition}
 Let the process $(Y_t)_{t \geq 0}$ be given by equations (\ref{eq:pricesexplevy})-(\ref{eq:tchan}). For any $t \geq 0$, $k=1,2,3,4$  and $j=1,2$ we have
 \begin{eqnarray}\label{eq:1mom}
   E\left[\Delta Y^{(j)}_t\right] &=& \mu_j E_{j,1} \\ \label{eq:2mom}
    E\left[(\Delta Y^{(j)}_t)^2\right] &=& \mu^2_j E_{j,2}+ \sigma^2_{r,j} E_{j,1}\\ \label{eq:3mom}
    E\left[(\Delta Y^{(j)}_t)^3\right] &=& \mu^3_j E_{j,3}+ 3 \mu_j \sigma^2_{r,j}E_{j,2}\\ \label{eq:4mom} 
     E\left[(\Delta Y^{(j)}_t)^4\right] &=& \mu^4_j E_{j,4}+6 \mu^2_j \sigma^2_{r,j}E_{j,3} + \sigma^4_{r,j} E_{j,2} \\
     \label{eq:12mom}
    E\left[\Delta Y^{(1)}_t \Delta Y^{(2)}_t\right] &=& \mu_1  \mu_2 E\left[(\Delta L^{(0)}_t)^2\right]
 \end{eqnarray}
 where
 \begin{equation*}
     E_{j,k}=E\left[(\Delta L^{(0)}_t)^k\right]+d^k_j E\left[(\Delta L^{(j)}_t)^k\right]. 
 \end{equation*}
\end{proposition}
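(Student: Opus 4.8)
The plan is to condition on the subordinator increments and exploit the conditional Gaussianity of a time-changed Brownian motion, which is precisely how the one-dimensional identities the proposition alludes to are obtained. Fix $t$, let $\Delta R_t^{(j)} = \Delta L_t^{(0)} + d_j\,\Delta L_t^{(j)}$ be the increment of the time change $R_t^{(j)}=L_t^{(0)}+d_j L_t^{(j)}$ over the return period, and let $\mathcal{G}$ be the $\sigma$-algebra generated by the increments $\Delta L_t^{(0)},\Delta L_t^{(1)},\Delta L_t^{(2)}$. Because the Brownian motion $(B_t)_{t\geq0}$ is independent of the subordinators (as already used in the proof of Proposition~\ref{chfcp}), has independent components, and starts at $0$, the increment of $B^{(j)}$ over the nonnegative deterministic lag $\Delta R_t^{(j)}$ is, conditionally on $\mathcal{G}$, centred Gaussian with variance $\Delta R_t^{(j)}$, and from \eqref{eq:tchan},
\[
\Delta Y_t^{(j)} \,\big|\, \mathcal{G} \;\sim\; N\!\left(\mu_j\,\Delta R_t^{(j)},\ \sigma_{r,j}^2\,\Delta R_t^{(j)}\right),\qquad j=1,2.
\]

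First I would substitute $m=\mu_j\,\Delta R_t^{(j)}$ and $v=\sigma_{r,j}^2\,\Delta R_t^{(j)}$ into the raw moments of an $N(m,v)$ variable, namely $m$, $m^2+v$, $m^3+3mv$, $m^4+6m^2v+3v^2$, to obtain $E[(\Delta Y_t^{(j)})^k\mid\mathcal{G}]$ as a polynomial in $\Delta R_t^{(j)}$ with coefficients built from $\mu_j$ and $\sigma_{r,j}$. Applying the tower property then reduces the problem to the first four moments of $\Delta R_t^{(j)}$; expanding $(\Delta L_t^{(0)}+d_j\,\Delta L_t^{(j)})^k$ by the binomial theorem and using the independence of $\Delta L_t^{(0)}$ and $\Delta L_t^{(j)}$ rewrites these in terms of $E[(\Delta L_t^{(0)})^k]$ and $E[(\Delta L_t^{(j)})^k]$, i.e. through the quantities $E_{j,k}$; collecting terms gives \eqref{eq:1mom}--\eqref{eq:4mom}. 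For the mixed moment \eqref{eq:12mom} I would condition on $\mathcal{G}$ once more: since $B^{(1)}$ and $B^{(2)}$ are independent of each other and of the subordinators, $\Delta Y_t^{(1)}$ and $\Delta Y_t^{(2)}$ are conditionally independent given $\mathcal{G}$, so $E[\Delta Y_t^{(1)}\Delta Y_t^{(2)}\mid\mathcal{G}] = (\mu_1\,\Delta R_t^{(1)})(\mu_2\,\Delta R_t^{(2)})$; taking the outer expectation and noting that $\Delta R_t^{(1)}$ and $\Delta R_t^{(2)}$ share only the common summand $\Delta L_t^{(0)}$ (the idiosyncratic parts $\Delta L_t^{(1)},\Delta L_t^{(2)}$ being independent of it and of each other) leaves the contribution $\mu_1\mu_2\,E[(\Delta L_t^{(0)})^2]$.

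Essentially all of this is bookkeeping; the one step needing genuine care is the conditional law in the first display — the assertion that, given the subordinator $\sigma$-algebra, a time-changed Brownian increment is exactly $N(0,\Delta R_t^{(j)})$. This rests on the independence of $B$ from the time change together with the fact that each $(R_t^{(j)})_{t\geq0}$ is nondecreasing and right-continuous, so the composition $B^{(j)}_{R^{(j)}}$ is well defined and its $\mathcal{G}$-conditional law is that of $B^{(j)}$ evaluated at a fixed time. A purely analytic alternative would be to read the moments off Proposition~\ref{chfcp}: differentiating the characteristic exponent \eqref{eq:chftch2} at $u=0$ yields the cumulants of $\Delta Y_t^{(j)}$ as sums of the cumulants contributed by $L^{(0)}$, $L^{(1)}$, $L^{(2)}$, and the moment--cumulant relations then reproduce the same identities; I would nonetheless keep the probabilistic derivation as the main argument, since the statement advertises it as the direct multivariate analogue of the standard one-dimensional computation.
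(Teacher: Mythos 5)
Your proposal is correct and takes essentially the same approach as the paper: both condition on the subordinator increments and use the conditional Gaussianity of the time-changed Brownian increment before passing the moments of $\Delta R^{(j)}_t$ to the quantities $E_{j,k}$ --- the only cosmetic difference is that the paper expands each power by the binomial theorem and kills the odd conditional moments term by term, whereas you invoke the Gaussian raw-moment formulas $m,\ m^2+v,\ m^3+3mv,\ m^4+6m^2v+3v^2$ wholesale. One small remark: your computation correctly yields $3\sigma_{r,j}^4 E_{j,2}$ as the last term of the fourth moment, which agrees with the paper's own derivation and with its later use in Section 4, so the coefficient printed in the statement of \eqref{eq:4mom} is missing a factor of $3$.
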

\begin{proof}
We have the relations $\Delta Y^{(j)}_t:= Y^{(j)}_{t+\Delta t}-Y^{(j)}_t=\mu_j\Delta R^{(j)}_t+ \sigma_{r,j} \Delta B^{(j)}_{R^{(j)}_t}$ and $\Delta B^{(j)}_{R^{(j)}_t}:= B^{(j)}_{R^{(j)}_t+\Delta t}-B^{(j)}_{R^{(j)}_t}$. By the independence of the subordinators  taking conditional expectations on both sides we have that $E_{j,k}=E[(\Delta R^{(j)}_t)^k]$. Furthermore,
\begin{eqnarray*}
E\left[\left(\Delta Y^{(j)}_t\right)^2\right] &=& \mu^2_j E\left[(\Delta R^{(j)}_t)^2\right]+ 2 \mu_j \sigma_{r,j} E\left[\Delta R^{(j)}_t \Delta B^{(j)}_{R^{(j)}_t}\right]+ \sigma^2_{r,j}E\left[(\Delta B^{(j)}_{R^{(j)}_t})^2\right].
\end{eqnarray*}
But
\begin{eqnarray*}
E\left[\Delta R^{(j)}_t \Delta B^{(j)}_{R^{(j)}_t}\right] &=& E\left[ E\left[\Delta R^{(j)}_t \Delta B^{(j)}_{R^{(j)}_t}\Big|\Delta R^{(j)}_t\right]\right]=E\left[ \Delta R^{(j)}_t E\left[\Delta B^{(j)}_{R^{(j)}_t}\Big|\Delta R^{(j)}_t\right]\right]=0, \\
E\left[(\Delta B^{(j)}_{R^{(j)}_t})^2\right] &=& E\left[ E\left[(\Delta B^{(j)}_{R^{(j)}_t})^2\Big|\Delta R^{(j)}_t)\right]\right]
=E\left[ \Delta R^{(j)}_t\right].
\end{eqnarray*}
Hence we have (\ref{eq:2mom}). For the third moment we have
\begin{eqnarray*}
E\left[(\Delta Y^{(j)}_t)^3\right] &=& \mu^3_j E\left[(\Delta R^{(j)}_t)^3\right]+ 3 \mu^2_j \sigma_{r,j} E\left[(\Delta R^{(j)}_t)^2 \Delta B^{(j)}_{R^{(j)}_t}\right]\\
&+& 3 \mu_j \sigma^2_{r,j}E\left[\Delta R^{(j)}_t(\Delta B^{(j)}_{R^{(j)}_t})^2\right]+ \sigma^3_{r,j}E\left[(\Delta B^{(j)}_{R^{(j)}_t})^3\right].
\end{eqnarray*}
Taking into account that
\begin{eqnarray*}
E\left[(\Delta R^{(j)}_t)^2 \Delta B^{(j)}_{R^{(j)}_t}\right] &=& E\left[ E\left[(\Delta R^{(j)}_t)^2 \Delta B^{(j)}_{R^{(j)}_t}\Big|\Delta R^{(j)}_t\right]\right]=E\left[ (\Delta R^{(j)}_t)^2 E\left[\Delta B^{(j)}_{R^{(j)}_t}\Big|\Delta R^{(j)}_t\right]\right]=0 \\
E\left[\Delta R^{(j)}_t (\Delta B^{(j)}_{R^{(j)}_t})^2\right] &=& E\left[ E\left[\Delta R^{(j)}_t (\Delta B^{(j)}_{R^{(j)}_t})^2\Big|\Delta R^{(j)}_t\right]\right]=E\left[ \Delta R^{(j)}_t E\left[(\Delta B^{(j)}_{R^{(j)}_t})^2\Big|\Delta R^{(j)}_t\right]\right] \\
&=& E\left[(\Delta R^{(j)}_t)^2\right]=E\left[(\Delta L^{(0)}_t)^2\right]+d^2_j E\left[(\Delta L^{(j)}_t)^2\right]\\
E\left[(\Delta B^{(j)}_{R^{(j)}_t})^3\right] &=& E\left[ E\left[(\Delta B^{(j)}_{R^{(j)}_t})^3\Big|\Delta R^{(j)}_t\right]\right]
=0
\end{eqnarray*}
from where we get (\ref{eq:3mom}).\\
Similarly, the forth moment is:
\begin{eqnarray*}
E\left[(\Delta Y^{(j)}_t)^4\right] &=& \mu^4_j E\left[(\Delta R^{(j)}_t)^4\right]+ 4 \mu^3 _j \sigma_{r,j} E\left[(\Delta R^{(j)}_t)^3 \Delta B^{(j)}_{R^{(j)}_t}\right]\\
&+& 6 \mu^2_j \sigma^2_{r,j}E\left[(\Delta R^{(j)}_t)^2(\Delta B^{(j)}_{R^{(j)}_t})^2\right]+ 4 \mu_j \sigma^3_{r,j}E\left[\Delta R^{(j)}_t (\Delta B^{(j)}_{R^{(j)}_t})^3\right]\\
&+& \sigma^4_{r,j}E\left[(\Delta B^{(j)}_{R^{(j)}_t})^4\right]
\end{eqnarray*}
where:
\begin{eqnarray*}
E\left[(\Delta R^{(j)}_t)^3 \Delta B^{(j)}_{R^{(j)}_t}\right] &=& E\left[ E\left[(\Delta R^{(j)}_t)^3 \Delta B^{(j)}_{R^{(j)}_t}\Big|\Delta R^{(j)}_t\right]\right]=E\left[ (\Delta R^{(j)}_t)^3 E\left[\Delta B^{(j)}_{R^{(j)}_t}\Big|\Delta R^{(j)}_t\right]\right]=0 \\
E\left[(\Delta R^{(j)}_t)^2 (\Delta B^{(j)}_{R^{(j)}_t})^2\right] &=& E\left[ E\left[\Delta R^{(j)}_t)^2 (\Delta B^{(j)}_{R^{(j)}_t})^2\Big|\Delta R^{(j)}_t\right]\right]=E\left[ (\Delta R^{(j)}_t)^2 E\left[(\Delta B^{(j)}_{R^{(j)}_t})^2\Big|\Delta R^{(j)}_t\right]\right]\\
&=&E\left[ (\Delta R^{(j)}_t)^3\right]=E\left[(\Delta L^{(0)}_t)^3\right]+d^3_j E\left[(\Delta L^{(j)}_t)^3\right]\\
E\left[\Delta R^{(j)}_t (\Delta B^{(j)}_{R^{(j)}_t})^3\right] &=& E\left[\Delta R^{(j)}_t E\left[(\Delta B^{(j)}_{R^{(j)}_t})^3\Big|\Delta R^{(j)}_t\right]\right]=0\\
E\left[(\Delta B^{(j)}_{R^{(j)}_t})^4\right] &=& 3 E\left[(\Delta R^{(j)}_t))^2\right]=3 \left(E\left[(\Delta L^{(0)}_t)^2\right]+d^2_j E\left[\Delta L^{(j)}_t\right]^2\right)
\end{eqnarray*}
Gathering all term we get (\ref{eq:4mom}). Finally, for the mixed moment
\begin{eqnarray*}
 E\left[\Delta Y^{(1)}_t  \Delta Y^{(2)}_t\right] &=& E\left[(\mu_1\Delta R^{(1)}_t+ \sigma_{r,1} \Delta B^{(1)}_{R^{(1)}_t})(\mu_2\Delta R^{(2)}_t+ \sigma_{r,2} \Delta B^{(2)}_{R^{(2)}_t})\right]\\
 &=& \mu_1 \mu_2 E\left[\Delta R^{(1)}_t \Delta R^{(2)}_t\right]+  \mu_1 \sigma_{r,2} E\left[\Delta R^{(1)}_t \Delta B^{(2)}_{R^{(2)}_t}\right] \\
 &+&  \mu_2 \sigma_{r,1} E\left[\Delta R^{(2)}_t \Delta B^{(1)}_{R^{(1)}_t}\right]
 +\sigma_{r,1} \sigma_{r,2} E\left[ \Delta B^{(1)}_{R^{(1)}_t}\Delta B^{(2)}_{R^{(2)}_t}\right]
\end{eqnarray*}
From the expressions:
\begin{eqnarray*}
E\left[\Delta R^{(1)}_t \Delta R^{(2)}_t\right] &=& E\left[(\Delta L^{(0)}_t)^2\right]+ d_1 d_2  E\left[\Delta L^{(1)}_t \Delta L^{(2)}_t\right]=E\left[(\Delta L^{(0)}_t)^2\right]\\
E\left[\Delta R^{(1)}_t \Delta B^{(2)}_{R^{(2)}_t}\right] &=& E\left[\Delta R^{(2)}_t \Delta B^{(1)}_{R^{(1)}_t}\right]
= E\left[ \Delta B^{(1)}_{R^{(1)}_t}\Delta B^{(2)}_{R^{(2)}_t}\right]=0 
\end{eqnarray*}
 we have (\ref{eq:12mom}).
 \end{proof}
   \begin{example} \label{Ex:2} (Two dimensional time-changed process with Gamma subordinator)\\
  Consider subordinators $(L^{(j)}_t)_{t \geq 0}, j=0.1.2$   of Gamma type. Their characteristic functions and exponent are:
  \begin{eqnarray*}
 \varphi_{L^{(l)}}(u)&=&  \left( 1-\frac{iu}{\beta_l} \right)^{-\alpha_l t}, \; \alpha_l, \beta_l > 0,\;l=0,1,2 \\
  \Psi_{L^{(l)}}(u)&=&-\alpha_l \log \left( 1-\frac{iu}{\beta_l} \right), \; \alpha_l, \beta_l > 0,\;l=0,1,2
 \end{eqnarray*}
 Notice that :
 \begin{equation*}
 E_{j,k}=\frac{\prod_{l=0}^{k-1}(\alpha_0 \Delta t+l)}{\beta^k_0}+d^k_j \frac{\prod_{l=0}^{k-1}(\alpha_j \Delta t+l)}{\beta^k_j},\;k=1,2,3,4; j=1,2
\end{equation*}
% Then, from equations (\ref{eq:1mom})-(\ref{eq:12mom}) and for $j=1,2$:
%  \begin{eqnarray*}
 %  E(\Delta Y^{(j)}_t) &=& \mu_j \left(\frac{\alpha_0 \Delta t}{\beta_0}+ d_j \frac{\alpha_j \Delta t}{\beta_j} \right) \\ 
 %   E[(\Delta Y^{(j)}_t)^2] &=& \mu^2_j \left( \frac{\alpha_j \Delta t(\alpha_0 \Delta t+1)}{\beta^2_0}+d^2_j \frac{\alpha_j \Delta t(\alpha_j \Delta t+1)}{\beta^2_j} \right) + \sigma^2_{r,j} \left(\frac{\alpha_0 \Delta t}{\beta_0}+  d_j \frac{\alpha_j \Delta t}{\beta_j} \right)\\
 %   E[(\Delta Y^{(j)}_t)^3] &=& \mu^3_j  \left(\frac{\alpha_0 \Delta t}{\beta_0}+ d^3_j \frac{\alpha_j \Delta t}{\beta_j} \right)+ 3 \mu_j \sigma^2_{r,j}(\frac{\alpha_0 \Delta t(\alpha_0 \Delta t+1)}{\beta^2_0}+d^2_j \frac{\alpha_j \Delta t(\alpha_j \Delta t+1)}{\beta^2_j})\\ \nonumber
  %   E[(\Delta Y^{(j)}_t)^4] &=& \mu^4_j \left(\frac{\alpha_0 \Delta t}{\beta_0}+ d^4_j \frac{\alpha_j \Delta t}{\beta_j} \right) +6 \mu^2_j \sigma^2_{r,j} \left(\frac{\prod_{l=0}^{2}(\alpha_0 \Delta t+l)}{\beta^3_0} +d^3_j \frac{(\prod_{l=0}^{2}(\alpha_j \Delta t+l)}{\beta^3_j} \right) \\   
 %    &+& 3 \sigma^4_{r,j} \left(\frac{\alpha_0 \Delta t(\alpha_0 \Delta t+1)}{\beta^2_0}+d^2_j \frac{\alpha_j \Delta t(\alpha_u \Delta t+1)}{\beta^2_j} \right) \\
  %   E(\Delta Y^{(1)}_t \Delta Y^{(2)}_t) &=& \mu_1  \mu_2 \frac{\alpha_0 \Delta t(\alpha_0 \Delta t+1)}{\beta^2_0}
% \end{eqnarray*}
 Hey proposition 1:
 \begin{equation*}
    \begin{aligned}
 \Psi_{Y}(u) &=& - \sum_{l=0}^2  \alpha_l \log \left( 1-\frac{iu}{\beta_l} \right)
 \end{aligned}
\end{equation*}
The marginal characteristic exponents  of $(Y_t)_{t \geq 0}$ are obtained also from equat(\ref{eq:chftch2})  as:
\begin{eqnarray*}
   \Psi_{Y^{(1)}_t}(u_1)&=& \Psi_{Y_t}(u_1,0)= -\alpha_0 \log \left( 1-\frac{i\mu_1}{\beta_0}u_1+\frac{\sigma^2_{1,r} u^2_1}{2 \beta_0} \right)\\
   &-& \alpha_1 \log \left( 1-\frac{i\mu_1 d_1}{\beta_1}u_1+\frac{\sigma^2_{1,r} u^2_1}{2  \beta_1} \right) \\
   \Psi_{Y^{(2)}}(u_2)&=&\Psi_{Y_t}(0,u_2)= -\alpha_0 \log \left( 1-\frac{i\mu_2}{\beta_0}u_2+\frac{\sigma^2_{2,r} u^2_2}{2 \beta_0} \right)\\
   &-& \alpha_2 \log \left( 1-\frac{i\mu_2 d_2}{\beta_1}u_2+\frac{\sigma^2_{2,r} u^2_2}{2 \beta_2} \right)
\end{eqnarray*}
Hence, the martingale conditions $\Psi^{(j)}(-i)=r$ translates into the equations:
\begin{eqnarray*} 
    \label{eq:FreeRisk}
    && -\alpha_0 \log \left( 1-\frac{\mu_j}{\beta_0}-\frac{\sigma^2_{j,r}}{2 \beta_0} \right)-\alpha_j \log \left( 1-\frac{\mu_j d_j}{\beta_j}-\frac{\sigma^2_{j,r} }{2 \beta_j} \right)=r,\;\;\ j=1,2
\end{eqnarray*}
\begin{figure}[ht]\label{fig:trajWTI}
    \begin{minipage}[b]{0.4\textwidth} 
    \includegraphics[trim={4cm 3cm 6cm 2cm},clip,scale=.3]{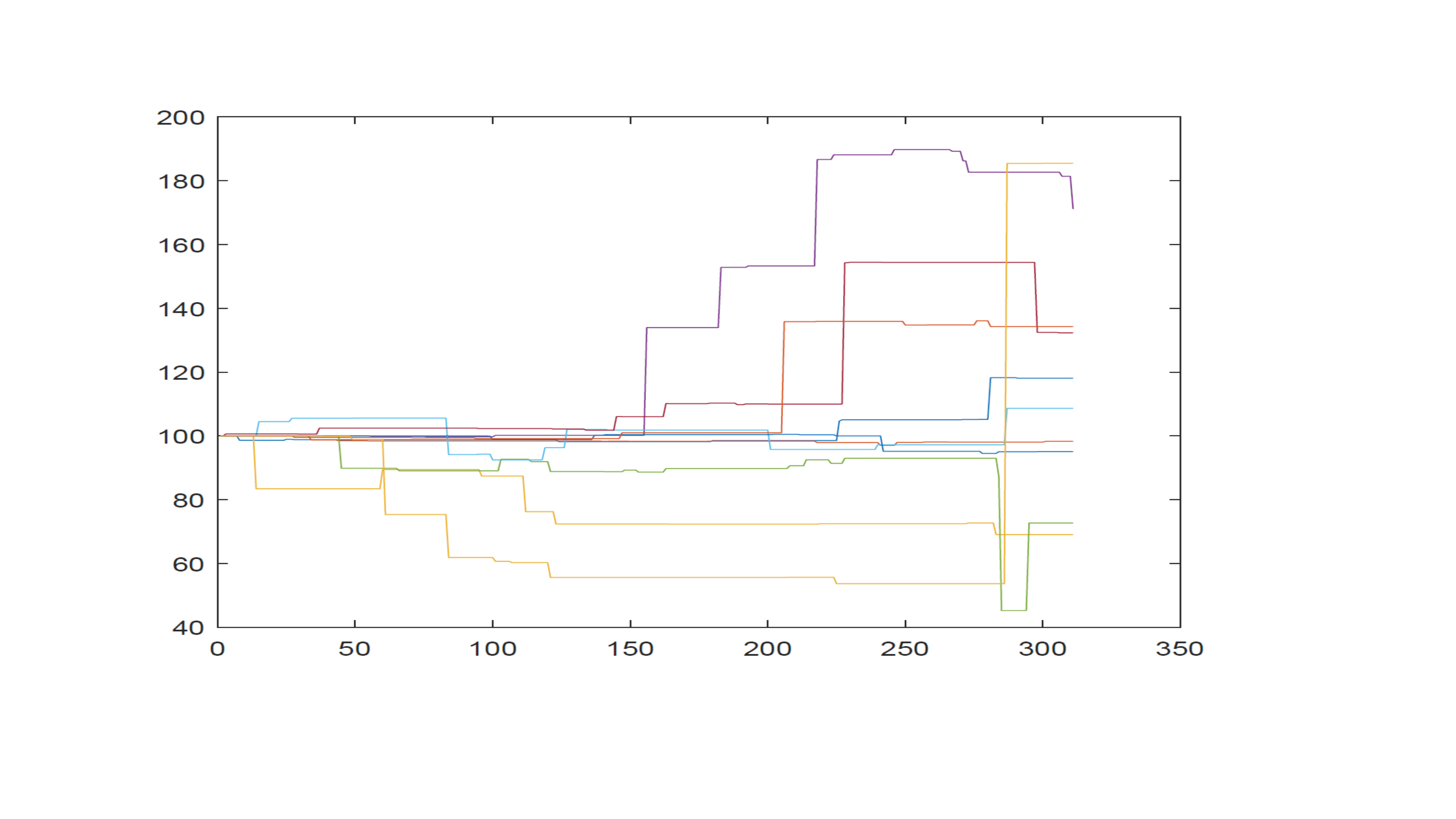}
    \caption{Simulated trajectories of WTI prices accordingly to a time-changed process under the Risk Neutral measure. Selection of parameters are discussed in section 4 }
    \end{minipage}
    \hspace{2cm}
    \begin{minipage}[b]{0.4\textwidth}
    \hspace{-.5cm}
    \includegraphics[trim={4cm 2cm 4cm 2cm},clip,scale=.3]{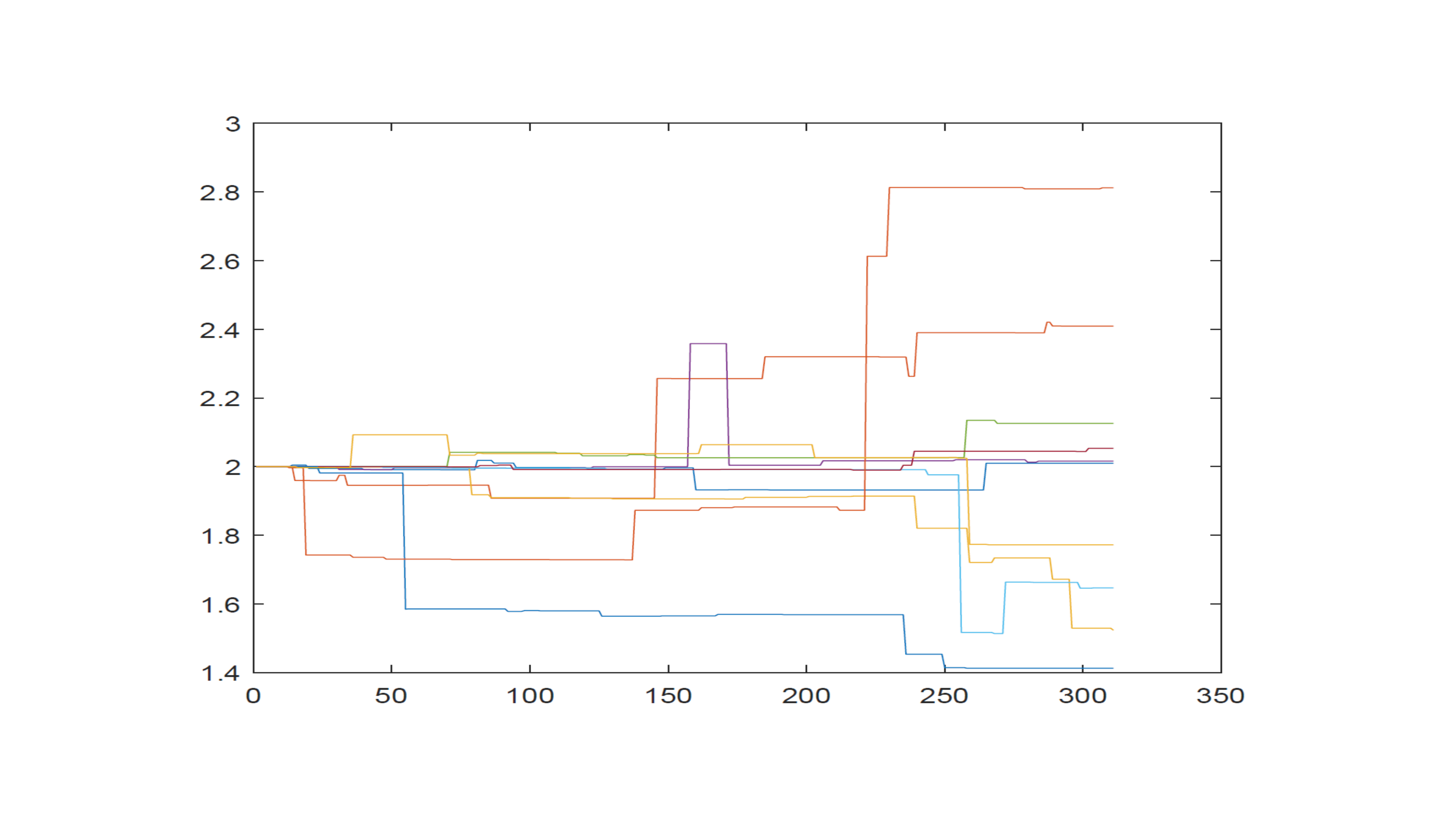}
    \caption{Simulated trajectories of RBOB future prices accordingly to a time-changed process under the Risk Neutral measure. Selection of parameters are discussed in section 4 }
    \end{minipage}
\end{figure}
\end{example}

 %%%%%%%%%%%%%%%%%%%%%%%%%%%%%%%%%%%55
%%
%%         PIDES
%%
%%%%%%%%%%%%%%%%%%%%%%%%%%%%%%%%%%%%
\subsection{Associated PIDE}  
 
Consider the interval $I=[0,T] \subset\mathbb{R}$, a terminal condition $h:\Re^2\rightarrow \mathbb{R}$ and a function $C:I \times \Re^2\rightarrow\mathbb{R}$.  The Kolmogorov equation is of the form
\begin{equation}
\begin{aligned}
     \partial_t C+ \mathcal{A} C &= 0,\\
     C(T) &= h,
\end{aligned}
\label{eq:Kolmogorov}
\end{equation}
where  $\mathcal{A}$ is the Kolmogorov operator of a Levy model and is given by
\begin{equation}
   \begin{aligned}
  \mathcal{A} C(t,x) = &\sum_{j,k=1}^2 \frac{\sigma^2_{jk}}{2}\frac{\partial^2 C}{\partial x_j \partial x_k}(t,x)+ \sum_{j=1}^2 b_j \frac{\partial C}{\partial x_j}(t,x)  \\
  &+\int_{\mathbb{R}^2} \bigg[C(t,y+x)-C(t,x)- \sum_{j=1}^2 y_j \frac{\partial C}{\partial x_j}(t,x) 1_{|y_j| <1}\bigg]\nu(dy).
\end{aligned} 
\end{equation}
for all $t\in I$ and $x\in\Re^2$, with symbol
\begin{equation*}
    A(\xi):=\frac{1}{2}\langle\xi,\sigma\xi\rangle + i\langle\xi,b \rangle - \int_\mathbb{R}^2 \left( \text{e}^{-i\langle\xi,y \rangle}-1+i\langle\xi,\textbf{h}(y)\rangle\right)\nu(\text{d}y) = -\Psi(-i\xi).
    \label{eq:Symbol_Levy}
\end{equation*}
If we change variable $\tau=T-t$ and define
\begin{equation*}
  u(\tau,x)=\exp(r \tau) C(T-\tau, e^{x^{(2)}_0+x_2}-c e^{x^{(1)}_0+x_1})
\end{equation*}
we have that $u$ satisfies equation.
\begin{equation}
   \begin{aligned}
 \partial_t u+ \mathcal{A} u + r u(\tau,x)&= 0\\
 u(0)&=h, \\
\end{aligned} 
\label{eq:Kolmogorov_Initial}
\end{equation}

\section{Variational formulation, Galerkin method and BTTB systems} \label{sec:Galerkin}

The Galerkin method is based in the variational formulation of problem \eqref{eq:Kolmogorov_Initial} and our choice for an implicit $\theta$-scheme requires the resolution of a BTTB system with non-symmetric and densely populated matrix at each time step. 

\subsection{Variational formulation}
Assuming that \eqref{eq:Kolmogorov_Initial} has a classical
solution $u \in C^{1,2}(\Re^2)$ (sufficiently regular to
apply Ito's formula) while satisfying an appropriate
integrability condition on $h$, there is a Feynman-Kac type
representation for $u(\tau)$ so that $C(t)$ has the stochastic representation \eqref{eq:spr2}. But, since we are considering 
a variational formulation of \eqref{eq:Kolmogorov_Initial}, 
such hypothesis turns out to be very restrictive. We are 
looking for less smooth solutions.
Let the Gelfand triplet $(V,H,V^*)$ be composed by separable
Hilbert spaces $H$, $V$ and the dual space $V^*$  such that
there exist a continuous embedding from $V$ into $H$. Denote
$\left(\cdot,\cdot\right)_H$ the inner product of $H$ and
$\langle\cdot,\cdot\rangle_{V^*\times V}$ the duality pairing.
Let $L^2(I,H)$ be the space of weekly measurable functions $u: I \rightarrow H$ with $\int_0^T\| u(t)\|_H^2 dt \leq \infty$. Weak solutions of \eqref{eq:Kolmogorov} will belong to the Sobolev space
\begin{equation}
W^1(I,V,H) := \lbrace u\in L^2(I,V) | \partial_tu \in L^2(I,V^*)\rbrace
\end{equation}
The variational formulation of problem \eqref{eq:Kolmogorov_Initial} results now in: \textit{To find}  $u \in W^1(I,V,H)$ \textit{such that}
\begin{equation}
\begin{aligned}
\left( \partial_t u(t),v \right)_H + a(u(t),v) + r\langle u(t),v\rangle_H &= 0  && \forall v\in V \\
u(t) &\rightarrow h 
\end{aligned}
\label{eq:weakform}
\end{equation}
where $h\in H$, convergence in the second line above is in the norm of $H$ and
\begin{equation}
a(u(t),v) = \left(  \mathcal{A}u(t),v\right)_H,
\end{equation}
We adopt the theoretical framework in \cite{glau2016feynman,eberlein2014variational,gass2018flexible,glau2016classification} and consider the Sobolev–Slobodeckii spaces $V = H^\alpha_\eta(\mathbb{R}^2)$ and $H=L^2_\eta(\mathbb{R}^2)$. For concrete definitions of weighted Sobolev–Slobodeckii spaces see \cite{eberlein2014variational,glau2016feynman}.  Two matters are to be considered, one related to \textit{existence} and \textit{uniqueness} of the solution of \eqref{eq:weakform}  and the other related to the equivalence
between such a solution and \eqref{eq:spr2}. Both problems has been addressed in \cite{eberlein2014variational} and \cite{glau2016feynman}. Well posedness  of problem
\eqref{eq:weakform} is guarantee by \cite[Theorem 5.3 pag. 15]{eberlein2014variational} if hypotheses (A1), (A2) and (A3) are to be satisfied by the symbol $A$ whenever the payoff $h\in L^2_\eta(\mathbb{R}^2)$. 
\begin{itemize}
    \item[(A1)] \textit{Assume that}
    \begin{equation*}
        \int_{|x|>1}\text{e}^{-\langle\eta',x \rangle}\nu(\text{d}x)<\infty \quad\quad\forall\eta'\in R_\eta
    \end{equation*}
    \item[(A2)] \textit{There exist a constant $C_1>0$ with}
\begin{equation*}
    \mid A(u) \mid\leq C_1\left( 1 + |u|\right)^\alpha \quad\quad \forall u\in U_{-\eta} \quad\quad\quad\quad\quad\quad (Continuity\, condition)
\end{equation*}
    \item[(A3)] There exist constants $C2>0$ and $C3 \leq 0$ uniformly in time, such that for a certain $0\leq \beta <\alpha$
    \begin{equation*}
        \mathbb{R}\left(A(u)\right)\geq C2\left(1+|u| \right)^\alpha -C_3\left(1+|u| \right)^\beta \quad\quad \forall u\in U_{-\eta} \quad\quad (G\mathring{a}rding \,condition)
    \end{equation*}
\end{itemize}
For European spread options the choice  
$ \eta\in(0,a-2)\times(-\infty,-a)$ for any $a>2$ 
guaranties that  $h \in L^2_\eta(\mathbb{R}^2)\cap
L^1_\eta(\mathbb{R}^2)$, as it is proved in \ref{A:parameter}.  
The case of the Double Merton model with common and
idiosyncratic jumps in Example \ref{Ex:1} we have that
\begin{align*}
 \int_0^T \int_{|x|\geq1}\text{e}^{\langle\eta',x \rangle}\nu(\text{d}x)\text{d}s \leq \infty & \quad\quad \forall \eta'\in\mathbb{R}^2
\end{align*}
which is a stronger condition than \textit{(A1)},
conditions \textit{(A2)} and \textit{(A3)} hold, see \ref{A:Continuity} and \ref{A:Garding} for the proof. Applying \cite[Theorem 5.3]{eberlein2014variational},  
there exist a unique solution of $u \in W^1(0,T;H_\eta^1(\mathbb{R}^2),L^2_\eta(\mathbb{R}^2))$ 
for problem \eqref{eq:weakform2}. Under the additional condition that $h\in L^1_\eta(\mathbb{R}^2)$ and as a consequence of \cite[Theorem 6.1 pag. 17]{eberlein2014variational} we have that the solution $u$ has the stochastic representation \ref{eq:spr2}.
\begin{remark}
It doesn't exist an index $\alpha$ for Example \ref{Ex:2} so we can not use the results above in this case. Even so, we applied the symbol method to this example and the numerical results in Section \ref{sec:Numerical} show convergence evidence.
\end{remark}
We separate the Kolmogorov operator in two parts, one grouping Diffusion, Drift and  and the other will be the integral operator $\mathcal{A} = \mathcal{BS} + \mathcal{I}$
\begin{equation}
   \begin{aligned}
  &\mathcal{BS}u(t,\textbf{x}) &=&\sum_{j,k=1}^2 \frac{\sigma^2_{jk}}{2}\frac{\partial^2 u}{\partial x_j \partial x_k}(t,\textbf{x})+ \sum_{j=1}^2 b_j \frac{\partial u}{\partial x_j}(t,\textbf{x}) + r\,u(t,\textbf{x}) \\
  &\mathcal{J}u(t,\textbf{x}) &=&\int_{\mathbb{R}^2} \bigg[u(t,\textbf{y}+\textbf{x})-u(t,\textbf{x})- \sum_{j=1}^2 y_j \frac{\partial u}{\partial x_j}(t,\textbf{x}) 1_{|y_j| <1}\bigg]\nu(dy).
\end{aligned} 
\end{equation}
for all $t\in I$ and $\textbf{x} \in \Re^2$. The bilinear form $a$ can now be written as 
\begin{equation}
\begin{aligned}
a(u(t),v) &= \left(\mathcal{BS}u(t),v\right)_H + \left(\mathcal{J}u(t),v\right)_H.
\end{aligned}
\end{equation}
The solution of the \eqref{eq:weakform} is defined in the whole $\mathbb{R}^2$ plane, in order to implement a numerical method we need first to localize to a bounded domain. A standard procedure  is to find suitable function $\psi$  asymptotically similar to the exact solution when $x\rightarrow \infty$ (see \cite{gass2018flexible,miglio2008finite}). By subtracting $\psi: I\times\Re^2\rightarrow\Re$ from $u$  we obtain a new equation equivalent to \eqref{eq:Kolmogorov_Initial} to be solved for 
$v = u-\psi$ 
\begin{equation}
\begin{aligned}
\partial_t v + \mathcal{BS}v + \mathcal{J}v &= f_\psi \\
v(0) &= h_{\psi}, 
\end{aligned}
\label{eq:vartrunc}
\end{equation}
where $h_{\psi} = h - \psi$  and the right hand side $f$ is obtained by 
\begin{equation}
f_\psi =  \left(\partial_t\psi + \mathcal{BS}\psi + \mathcal{J}\psi\right).
\end{equation}
And we obtain a variational formulation equivalent to \eqref{eq:weakform} by following the steps above: \textit{To find}  $v \in W^1(I;V,H)$ \textit{such that}
\begin{equation}
\begin{aligned}
( \partial_t v,\varphi )_H + (\mathcal{BS}v,\varphi)_H + (\mathcal{J}v,\varphi)_H  &= \langle f_\psi,\varphi) \rangle_{V^*\times V}  && \forall \varphi\in V \\
v(t\rightarrow 0) &= h_\psi 
\end{aligned}
\label{eq:weakform2}
\end{equation}
 The original solution of the problem is restored $u = v + \psi$. There are different choices for $\psi$, in this work the choice will be just the terminal condition. Boundary
 conditions to be imposed are at most an approximation of 
 the real value of the solution outside the domain of 
 interest, that's why a typical practice is to consider 
 a wider outer domain containing the domain of interest 
 to keep the error associated to the inaccuracies of the boundary conditions away. The wider the  outer domain is 
 the less the error propagates inside the domain of interest. The  associated truncation error can be estimated, see for example \cite[Chapter 2.3]{clift2007linear}.

\subsection{Galerkin method, $\theta$-scheme and symbol method}

We proceed as usual by truncating the unbounded range
$\Re^2$ to a bounded computational domain $\Omega$ 
and considering Hilbert spaces  $H = L^2(\Omega)$ 
and $V = H^1_0(\Omega)$ on $\Omega$ with the usual
inner product $(\cdot,\cdot)_{L^2}$. Since
$H^1_0(\Omega)$ is separable it contains a countable
basis $\{\varphi_k\}_{k\in\mathbb{N}}$ and we restrict 
our selves to a finite dimensional space 
$V^N \subset H^1_0(\Omega)$ with basis 
$\lbrace \varphi_1, \varphi_2,...,\varphi_N \rbrace$
where we consider approximated solutions of the form
$u(x) = \sum_{k=1}^N U_k(t)\varphi_k$ and an
approximation of the initial condition 
$h_N = \sum_{k=1}^N h_k\varphi_k$. The Galerkin
formulation of (\ref{eq:weakform}) now reads :
\textit{Find} $U_k(t)\in \Re^N$ \textit{such that}
\begin{align}
\sum^N_{k=1}\dot{U}_k(t)(\varphi_k,\varphi_j)_{L^2} + \sum^N_{k=1}U_k(t)(\mathcal{BS}\varphi_k,\varphi_j)_{L^2} + \sum^N_{k=1}U_k(t) (\mathcal{J}\varphi_k,\varphi_j)_{L^2} &= (f_\psi(t),\varphi_j)_{L^2} \label{eq:ODEs}\\
U_j(0) &= (h_\psi,\varphi_j)_{L^2},\label{eq:ODEs_Init}
\end{align}
for all $j=1,...,N$. There might be several possible choices for $\psi$, in present work we use the terminal condition (payoff). As $\psi$ is just an approximation of the solution outside the domain of interest, a typical practice is to consider solve the problem in the outer domain $\Omega$ containing the domain of interest to keep away the error associated to the inaccuracies of the in boundary. The wider the  outer domain is the less the propagation of the error inside the domain of interest. The error associated to boundary conditions can be predicted, 
see \cite[Chapter 2.3]{clift2007linear}. 
The  problem \eqref{eq:ODEs}-\eqref{eq:ODEs_Init} can be written as a system of ODEs
\begin{equation}
\begin{aligned}
\textbf{M}\dot{U}(t) + \textbf{A}U(t) + \textbf{J}U(t) + r\textbf{M} U(t)&= F(t)
\end{aligned}
\label{eq:EDOSyst}
\end{equation}
with $U(0)$ as in \eqref{eq:ODEs_Init}, $F(t) = [F_1(t), ...\, ,F_N(t)]  = \{(f(t),\varphi_k)_{L^2}\}_{k=1}^N$,  mass matrix \textbf{M}, stiffness matrix \textbf{A}  and \textbf{J}  the matrix associated to the pure jump operator. 
\begin{align}
\textbf{M} = (\textbf{M}_{kj}) = (\varphi_j , \varphi_k)_{L^2}, && \textbf{A} = (\textbf{A}_{kj}) = (\mathcal{BS}\varphi_j , \varphi_k)_H && \textbf{J} = (\textbf{J}_{kj}) = (\mathcal{J}\varphi_j , \varphi_k)_{L^2}
\label{eq:matrices}
\end{align}
 To solve the ODE system (\ref{eq:EDOSyst}) let us consider a uniform partition $\lbrace t_0,t_2, ...\, ,t_M \rbrace$ of $I$ with norm's partition $\Delta t = T/M$. We  approximate the coefficients $U(t)$ at the partition points, we call $U^k = U(t_k)$ to be obtained by solving the following semi-implicit $\theta$-Scheme
\begin{equation}
U^{k+1} =(\textbf{M}+\Delta t\,\theta \,(\textbf{A}+\textbf{J}+r\textbf{M}))^{-1} \left((\textbf{M} - \Delta t (1 - \theta)(\textbf{A} + \textbf{J}+r\textbf{M})) U^k  + \theta\left(F^{k}\right) + (1-\theta) F^{k+1}\right)
\label{eq:thetasheme}
\end{equation} 
 Now we specify the approximation space we use along this work. Let $\Omega$ be a square domain and a family of meshes $\mathcal{T}^h(\Omega)$ defined by identical squares where $h$ is the norm of the mesh coinciding with the length of the square's side. Let $V_N$ be the space of all continuous functions with continuous second derivatives that are piecewise third  order polynomials on each element of $\mathcal{T}^h(\Omega)$ and let $s_3$ be the Irwin-Hall cubic spline on the pattern interval $[-2,2]$.
\begin{equation*}
    s_3(x) = 
    \begin{dcases}
     \frac{1}{4}  (x+2)^2           & -2 \leq x <-1 \\
     \frac{1}{4}( 3|x|^3 - 6x^2 +4) & -1 \leq x <1 \\
      \frac{1}{4}(2-x)^3           & \,\,\, 1 \leq x \leq 2 \\
      0                 & \,\,\, \text{otherwise}
      \end{dcases}
\end{equation*}
Define the mother base $\varphi_0(\textbf{x}) = s_3(h^{-1}
x_1)  s_3(h^{-1} x_2)$ and for each  node
$\textbf{x}^{(j)}$ associated to the mesh $\mathcal{T}^h$ we define the nodal function $\varphi_j$ as the displacement of $\varphi_0$ 
 \begin{equation}
     \varphi_{j}(\textbf{x}) = \varphi_0(\textbf{x} - \textbf{x}^{(j)}) %= s_3(h^{-1}(x_1-x_1^{(j)}))\cdot (s_3(h^{-1}(x_2-x_2^{(j)}))
     \label{eq:nodalbasis}
 \end{equation}
The set of all nodal basis \eqref{eq:nodalbasis} is a basis for $V_N$. So we have set the problem in the classical finite elements framework. The first two integrals in \eqref{eq:matrices}  can be solved using Gaussian Legendre quadrature. The Matrix $\textbf{J}$ in \eqref{eq:matrices} will be approximated by the \textit{symbol method for stiffness matrices} as in \cite[Corollary 4.4]{gass2018flexible}. 
\begin{equation}
    \textbf{J}_{kj}=\left(\mathcal{J}[\varphi_j],\varphi_k\right)_{L^2} = \frac{1}{(2\pi)^2}\int_{\Re^2} J(\xi)e^{i\xi\cdot\left(\textbf{x}^{(j)} - \textbf{x}^{(k)}\right)}|\widehat{\varphi}_0(\xi)|^2\text{d}\xi,
    \label{eq:SymbolMeth}
\end{equation}
here $J$ is the symbol of the pure jump process and using \cite[Lemma 4.11]{gass2018flexible} we get
\begin{equation*}
    \widehat{\varphi}_0(\xi) = \left(\frac{3}{\xi_1^4}\left(\cos(2\xi_1) -4\cos(\xi_1)+3\right)\right)\left(\frac{3}{\xi_2^4}\left(\cos(2\xi_2) -4\cos(\xi_2)+3\right)\right),
\end{equation*}
for all $\xi = (\xi_1,\xi_2)\in\Re^2$. Integrals in \eqref{eq:SymbolMeth} are suitable to be computed using
Fast Fourier Transform so $\mathcal{O}\left( N\log(N)\right)$
operations are involved in this step. Matrix $\textbf{J}$ 
is full but is still a BTTB matrix and so are $\textbf{M}$ 
and $\textbf{A}$ in \eqref{eq:matrices} as  well as their
linear combinations.

\subsection{BTTB Systems}

The general $m$-by-$m$
block Toeplitz matrix with $n$-by-$n$ Toeplitz blocks is
defined
\begin{equation}
T_{mn} = 
\begin{bmatrix}
T_{0} & T_{-1} &   &  & T_{-n+2}&T_{-n+1}\\
T_{1} & T_{0} & \ddots & &  & T_{-n+2} \\
      & T_{1} &\ddots &   &  &   \\
      &    & \ddots &  &    & \\
T_{n-2} &   &    &  &   & T_{-1}\\
T_{n-1} & T_{n-2}   & \dots &   & T_{1}   & T_{0}.
\end{bmatrix}
\label{eq:bttbmatrix}
\end{equation}
Matrix $T_{mn}$ has a blocked Toeplitz structure where each block $T_l$, for $|l| \leq m-1$, is itself a Toeplitz matrix of order $n$.  There exist direct methods that solve BTTB systems in $\mathcal{O}(mn\log^2(mn))$ (see \cite{kumar1985fast}). Another
approach  is to use iterative methods, a 
brief description of Preconditioned Conjugated Gradient (PCG) 
based method is presented in \cite[Chapter 5]{chan2007introduction}  and two different circulat preconditioners are considered to
cluster the eigenvalues of $T_{mn}$ around $1$. Of course, the 
use of PCG is suitable when matrix $T_{mn}$ is symmetric which 
is not our case since the action of the drift breaks-down the operator symmetry. 

We use the bi-conjugated gradient stabilized method (BICGSTAB) which is suitable for non-symmetric linear systems of equations. Fortunately, the circulant preconditioner $c_{F\otimes F}$ presented in \cite{chan2007introduction} works for general BTTB matrices. Also, BTTB matrices vector product $T_{mn}b$ can be performed in $\mathcal{O}(mn\log(mn))$ operations (see \cite{chan2007introduction}). The evaluation is carried on by
first embedding each block $T_k$ of matrix $T_{mn}$ in \eqref{eq:bttbmatrix} in a circular matrix $C_k\in\Re^{n\times n}$ as follows, the block $T_k$ is defined by its first column $v_1 = T_k(:,1)$ and its first line (with zero in the first position) $v_2 = [0,T_k(1,2:\text{end})]$, and define $C_k$ by defining its first column $C_k(:,1) = [v_1,v_2]$. In a similar way we define $C_{2m,2n}$ by defining its first  block column as $C_{2m,2n}(:,1:2n) = \left[C_0,...,C_{1-m},\textbf{0}_{2n},C_1,...,C_{m-1} \right]$. Matrix $C_{2m,2n}$ can  be decomposed
\begin{equation}
    C_{2m,2n} = (I_{2m} \otimes F_{2n})^*\cdot P^T \cdot(I_{2n}\otimes F_{2m})^*\cdot (\delta(F_{2m}F_{2n} C))\cdot(I_{2m} \otimes F_{2m}) \cdot P\cdot(I_{2n}\otimes F_{2n}),
    \label{eq:circular}
\end{equation}
where $F_k$ is the Fourier matrix of dimension $k\times k$, $C$ is the matrix formed by the first column of the component wise circulant matrix in which each Toeplitz matrix entry is embedded; and $P$ is a permutation matrix that reorders columns of the identity $I_{4mn}$ in the following way
\begin{equation*}
    i \rightarrow 2n\cdot \left[(i-1)(\mod 2m)\right] + \floor*{\frac{i-1}{2m}}  + 1
\end{equation*}
being $i$ the index of the columns of $I_{4mn}$ . Matrix $P$ reorders the result of $F_{2n}C$ in a block diagonal Toeplitz matrix. After that we perform the product  $C_{2m,2n}\overline{b}$ applying the formula \eqref{eq:circular}, where $\overline{b}$ contains the entries of vector $b = [b_1,b_2,...,b_m]^T$ with $b_j\in\Re^n$ and is filled by zeros as follows
\begin{equation}
\overline{b}  = \bigg[ \Big[ \big[b_1,\textbf{0}_n\big], ... ,\big[ b_m,\textbf{0}_n \big] \Big],\textbf{0}_{2n\cdot m} \bigg] 
\label{eq:filledB}
\end{equation}
and the solution of the original product is obtaining by cropping the positions where zeros appear in \eqref{eq:filledB}. For a detailed explanation see \cite{gray2006toeplitz,chan2007introduction} 
Now we present the definition of the preconditioner $c_{F\otimes F}$ and a formula to obtain it. Consider a general blocked matrix
 \begin{equation*}
A_{mn} = 
\begin{bmatrix}
A_{1,1} & A_{1,2} & \dots & A_{1,m} \\
A_{2,1} & A_{2,2} & \dots & A_{2,m} \\
\vdots  & \ddots  &\ddots & \vdots  \\
A_{m,1} & A_{m,2}   & \dots & A_{m,m}
\end{bmatrix}
\label{eq:bttbmatrix1}
\end{equation*}
where  $A_{i,j}\in\Re^{n\times n}$. Given $2$ unitary matrices $V$ and $U$, let
\begin{equation}
    \mathcal{M}_{V\otimes U} \equiv \{ (V\otimes U)^*\Lambda_{mn} (V\otimes U) : \Lambda_{mn} \text{ is a diagonal } mn \times mn \text{ matrix}\}
\end{equation}
 the preconditioner $c_{V\otimes U}(A_{mn})$ is defined as
\begin{equation*}
    c_{V\otimes U}(A_{mn}) = \arg\min_{W_{mn}\in\mathcal{M}_{V\otimes U}} \|A_{mn} - W_{mn} \|_{\mathcal{F}}
\end{equation*}
where  $\|\cdot\|_{\mathcal{F}}$ is the Forbenius norm. If it is the case that $U = F$ and $V = F$  we have the fallowing formula
(see \cite[eq. 5.17]{chan2007introduction})
\begin{equation}
     c_{F\otimes F}(A_{mn}) = \frac{1}{mn}\sum_{j=0}^{m-1}\sum_{j=0}^{n-1} \left( \sum_{p-q\equiv j(\text{ mod } n)} \sum_{r-s\equiv k(\text{ mod } n)} (A_{pq})_{rs}\right) (Q^j\otimes Q^k)
    \label{eq:circulantCoef}
\end{equation}
where $Q$ is an $n$-by-$n$ circulant matrix given by
\begin{equation*}
Q = 
\begin{bmatrix}
0 &        &        &        &   1 \\
1 & 0      &        &        &     \\
0 & 1      & \ddots &        &     \\
\vdots & \ddots & \ddots & \ddots & \\
0 & \dots  & 0   & 1   & 0
\end{bmatrix}
\label{eq:bttbmatrix2}
\end{equation*}
It is important to notice that $c_{F\otimes F}(T_{mn})$ is a circulant matrix determined by its first column so there is no need to compute all entries in \eqref{eq:circulantCoef} which leads, of course, to a considerable reduction in computational time.  Note also that by using iterative methods we just need to perform  matrix vector products, so each iteration of the method will be done in $\mathcal{O}(nm\log(nm))$ and its overall performance will preserve that order as long as the preconditioner allows for just a few iterations. 

For a closer approach to this and others circulant preconditioners and its properties see \cite{chan2007introduction} and the precedents works \cite{chan1988optimal,chan1991circulant,tyrtyshnikov1992optimal}. Other iterative methods can be considered, e.g. Generalized Minimal Residual (GMRES) method introduced in \cite{saad1986gmres}, which is also a Krilov method; or classic non-Krilov  iterative methods such as SOR, Gauss-Seidel or Jacobi. In any case, preconditioning will play an essential roll in the method's performance.

\section{Numerical results}
\label{sec:Numerical}

By using a Galerkin approximation in space and a
$\theta$-scheme ($\theta = \frac{1}{2}$, Crank-Nicholson) for time evolution look for an approximate solution of problem \eqref{eq:vartrunc} as shown in Section \ref{sec:Galerkin}. 
By doing so we approximate the price of spread contracts 
associated to the models introduced in examples \ref{Ex:1} 
and \ref{Ex:2}.  In particular, we focus on \textit{cracks} contracts based  on the difference between the price of oil and gasoline. Specifically, we consider the prices of futures RBOB 
gasoline (reformulated blendstock for oxygenate blending) 
and West Texas Intermediate(WTI) oil in NYMEX.

The contract specifications consist on a series of strike
prices around $K=1$ US dollar, motivated by typical
\textit{in-the-money}, \textit{at-the-money} and
\textit{out-of-the-money} contracts. Furthermore, a 
maturity ranging from 1 month to 1 year, initial underlying
prices $S_0^{(1)} =100 $ dollars/barrel, $S_0^{(2)}=2 $
dollars/gallon with an interest rate of $r=2\%$ are 
considered.

In Figure \ref{fig:wti} the series of daily future prices of
WTI in US dollar per barrel, from June 2104- July 2020 is
shown, while in Figure  \ref{fig:gaso} the series of future RBOB prices 
(in US dollar per gallon) over the same period can be 
observed. A rare negative price for WTI futures is observed 
on April 20th, 2020. We have  eliminated this observation to 
be able to work with log-returns. It is worth noticing that
the volatilities in the prices of both commodities have
increased after March 2020 due to the COVID-19 pandemic. 
Data have been taken from https://ca.investing.com. 
In \textit{crack} contracts  $c=\frac{1}{42}$ 
barrels/gallon.

\begin{figure}[ht]
    \begin{minipage}[b]{0.6\textwidth} 
    \includegraphics[scale=.25]{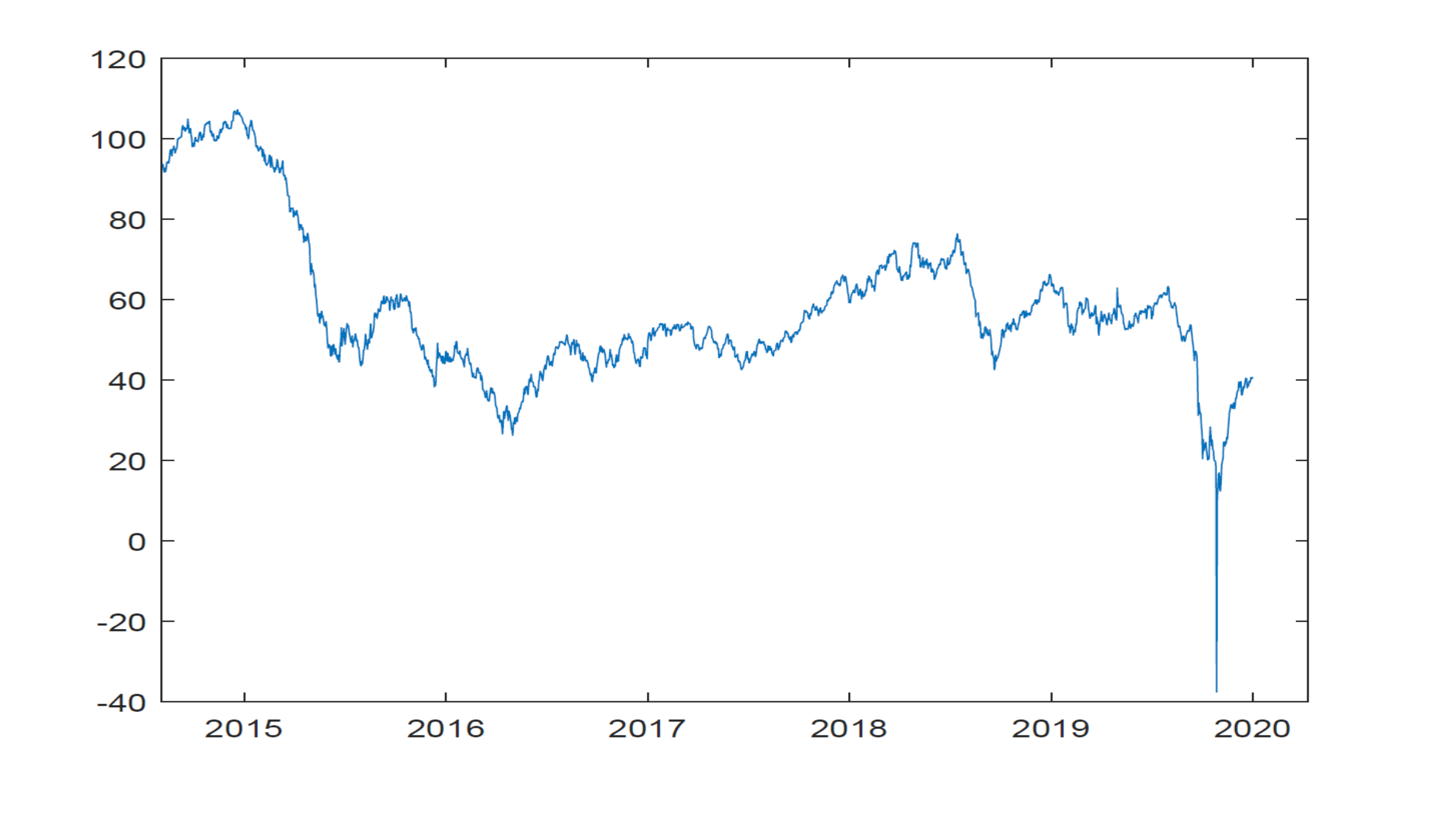}
    \caption{ series of daily future prices of WTI in US dollar per barrel, from June 2104- July 2020}
    \label{fig:wti}
    \end{minipage}
    \begin{minipage}[b]{0.5\textwidth}
    \includegraphics[scale=.25]{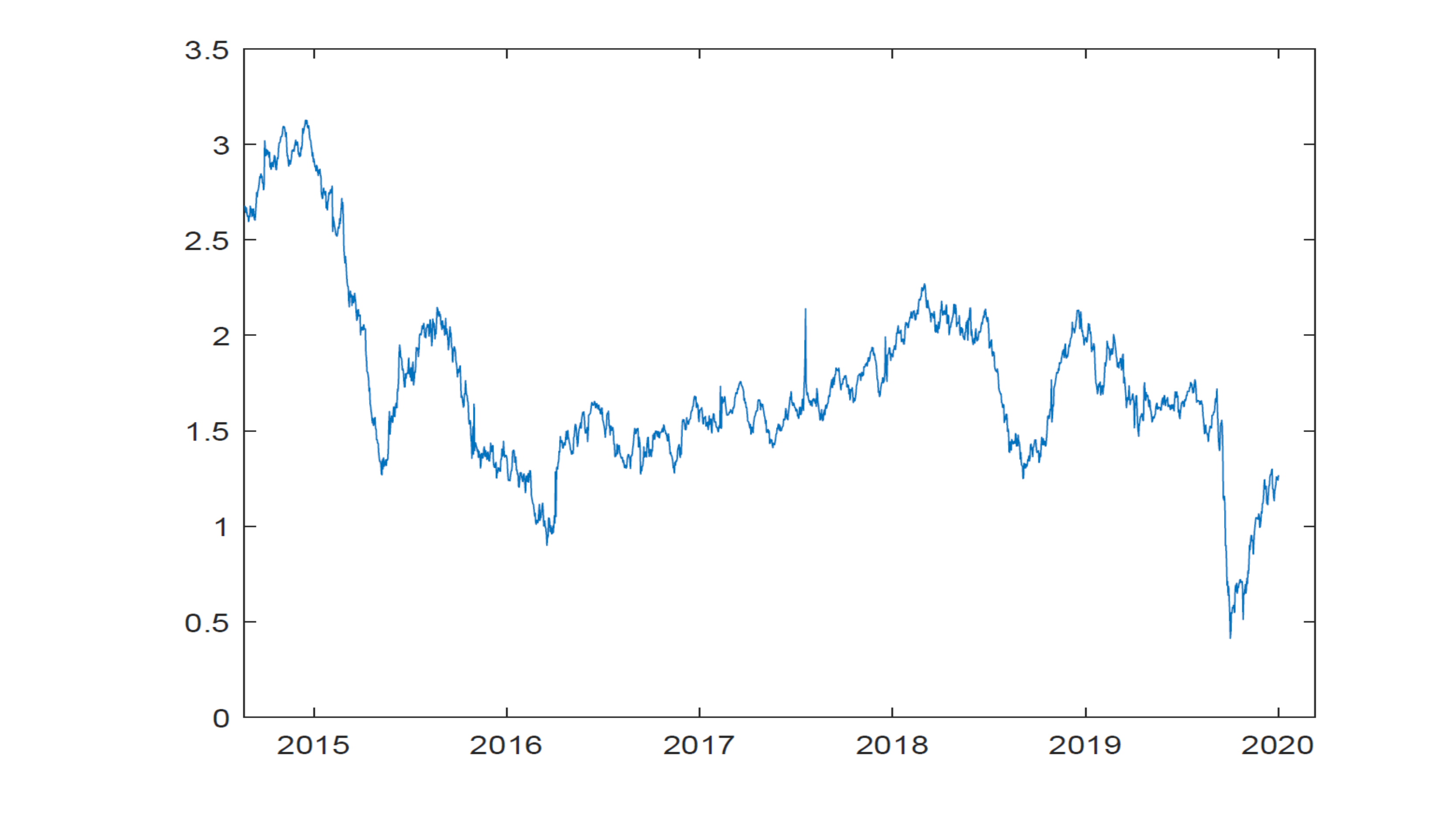}
    \caption{Series of future RBOB prices(in US dollar per gallon) over the same period }
    \label{fig:gaso}
    \end{minipage}
\end{figure}

In table \ref{Tab:Table3} the first four empirical moments appear for both RBOB and WTI as well as their log-returns. Volatilities have a similar range.  Specifically, annualized volatility of log- return WTI prices is 48.27 \% as for RBOB is 54.30 \%.
\begin{table}[ht]
    \centering
    \begin{tabular}{ |c||c||c||c||c|  }
     \hline
    % \multicolumn{5}{|c|}{} \\
      Data set & Mean & St. dev. & Skewness & Kourtosis\\ \hline 
    RBOB prices &1.7408  &  0.4767 &   0.8071 &   4.1574 \\ \hline 
    RBOB log-returns  &  -0.0004  &    0.0299 &   -0.5568 &  23.2888 \\ \hline 
     WTI price   &  57.7983  & 18.4973  &  0.9846 &   3.8519  \\ \hline 
     WTI log-returns & -0.0005 &  0.0337 &  -3.0679 &  79.3556 \\ \hline
\end{tabular}
\caption{First four moments of RBOB and WTI prices and their log-returns}
\label{Tab:Table3}
\end{table}
 The WTI series exhibits a highly skewed distribution. Kurtosis of both series indicate the presence of heavy-tailed distributions. It is three times higher in WIT. As expected both assets are highly correlated. The correlation between prices is about 0.96, while the correlation between log-returns is 0.65.
\subsection{Double Merton jump-diffusion model implementation}
Parameters in the bivariate jump-diffusion Merton model with common and idiosyncratic jumps introduced in example \ref{Ex:1}.

\begin{table}[ht]
    \centering
    \begin{tabular}{ |c||c|  }
     \hline
    % \multicolumn{2}{|c|}{} \\
    Parameters & Value \\ \hline 
  Interest rate & $ r=0.02$ \\ \hline 
   Diffusion & $\sigma_1=0.7025, \sigma_2=0.5356, \rho_B=0.5364$ \\ \hline 
  idiosyncratic   jump sizes & $\mu^{(1)}_J=0, \mu^{(2)}_J=0$\\ 
     & $\sigma^{(1)}_J=0.2808, \sigma^{(2)}_J=0.3528$ \\ \hline
 common jump sizes  &  $\mu_{0,1}=-0.0775, \mu_{0,2}= -0.0620$ \\ \hline
 & $\sigma_{0,1}=0.02, \sigma_{0,2}=0.01, \rho_{J}=0.30$\\ \hline
    Jump intensities & $\lambda_0=3, \lambda_1=2, \lambda_2=2 $  \\ \hline
    initial prices & $S_0^{(1)} =100 , S_0^{(2)}=2$\\ \hline
\end{tabular}
\caption{Parameters in the jump-diffusion model}
\label{Tab:Table4}
\end{table}
 Unfortunately, the number of parameters in the model is exceedingly large for most  known estimation methods. Therefore, we rather arbitrarily impose values to some of the parameters, while using a Generalized Method of Moment (GMM) approach to calibrate the remaining ones. We match first, second moments and correlations for both underlying assets as shown in equation (\ref{eq:mathcmom}). It leads to an optimization problem with bound constraints in the variables. Additional constraints imposed  by the EMM choice are reflected in equation (\ref{eq:rnconst}).    The results can be seen  are shown in table \ref{Tab:Table4}.
 %An \textit{ad hoc} hybrid  method  to calibrate  the parameters in the pricing of crack spread options with gasoline and oil as underlying assets under a similar model and using underlying asset  and contract prices can be found in \cite{mansoli}.
 Notice the jumps are not directly observable, making the estimation or calibration of the parameters a challenging task. Hence for $j=1,2$:
\begin{eqnarray}\nonumber
   \frac{\bar{\Delta Y^{(j)}}}{\Delta t}&=& \lambda_j \mu^{(j)}_J+ \lambda_0  \mu_{0,j} \\ \nonumber
   \frac{S^2(\Delta Y^{(j)})}{\Delta t}+\frac{((\bar{\Delta Y^{(j)}})^2}{\Delta t} &=& \sigma^2_j  +\lambda_j (\sigma^{(j)}_J)^2+\lambda_0  \sigma^2_{0,j}+ \lambda_j (\mu^{(j)}_J)^2+ \lambda_0  \mu^2_{0,j} \\ \label{eq:mathcmom}
   corr(\Delta Y^{(1)},\Delta Y^{(2)})&=& \frac{\rho_B \sigma_1 \sigma_2 \Delta t+ \rho_{J}\sigma_{0,1}\sigma_{0,2}\Delta t}{S(\Delta Y^{(1)}) S(\Delta Y^{(2)})}
\end{eqnarray}
where $\bar{Y}$ and $S^2(Y)$ are respectively the sample mean and the sample variance of the vector $Y$.
The time interval, measured in year units, is $\Delta t=\frac{1}{310}$ corresponding to 310 trading days.
From the condition $\Psi_{Y^{(i)}}(-i)=r$ and the drift $b_j=\lambda_j \mu^{(j)}_J+ \lambda_0  \mu_{0,j}$
 we add another two constrains to the parameters:
 \begin{eqnarray}\label{eq:rnconst}
 && \frac{\bar{\Delta Y^{(j)}}}{\Delta t}=  r-\frac{1}{2}\sigma^2_j-\lambda_j(\exp(\mu^{(j)}_J+ \frac{1}{2} (\sigma^{(j)}_J)^2)-1)- \lambda_0(\exp(\mu^{(j)}_{0,J} + \frac{1}{2} (\sigma^2_{0,j})-1).
 \end{eqnarray}
Next, we run the scheme \eqref{eq:thetasheme} with $\theta = 1/2$ to obtain approximated solutions $U^N$ 
in  meshes $\mathcal{T}^h(\Omega_b)$ of norm $h=L/2^N$ 
for $N=4,5,6,7,8$. Errors in norm $\|\cdot\|_{L^2}$ are
computed using a reference solution $U^{N_r}$ where $N_r=9$.
The BICGSTAB method is set to run until finding a solution with
tolerance to the residual relative error of $10^{-10}$. 
We are interested in values for both $S_1$ and 
$c\cdot S_2$ varying between $0$ and $3$ dollars, so the
effective domain is set $\Omega = \left[ -1.1 \,,\, 1.1\right]^2 \approx \left[log(0.3)\,,\,log(3)\right]$ while boundary  conditions (identical to the initial condition at each time) are imposed outside the outer domain $\Omega_b = \left[ -4, -4\right]^2$ to attenuate boundary error propagation to the interior of $\Omega$.
\begin{figure}[ht]
    \begin{minipage}[b]{0.5\textwidth} 
    \includegraphics[trim={1cm 0cm 1cm 1cm},clip,scale=.45]{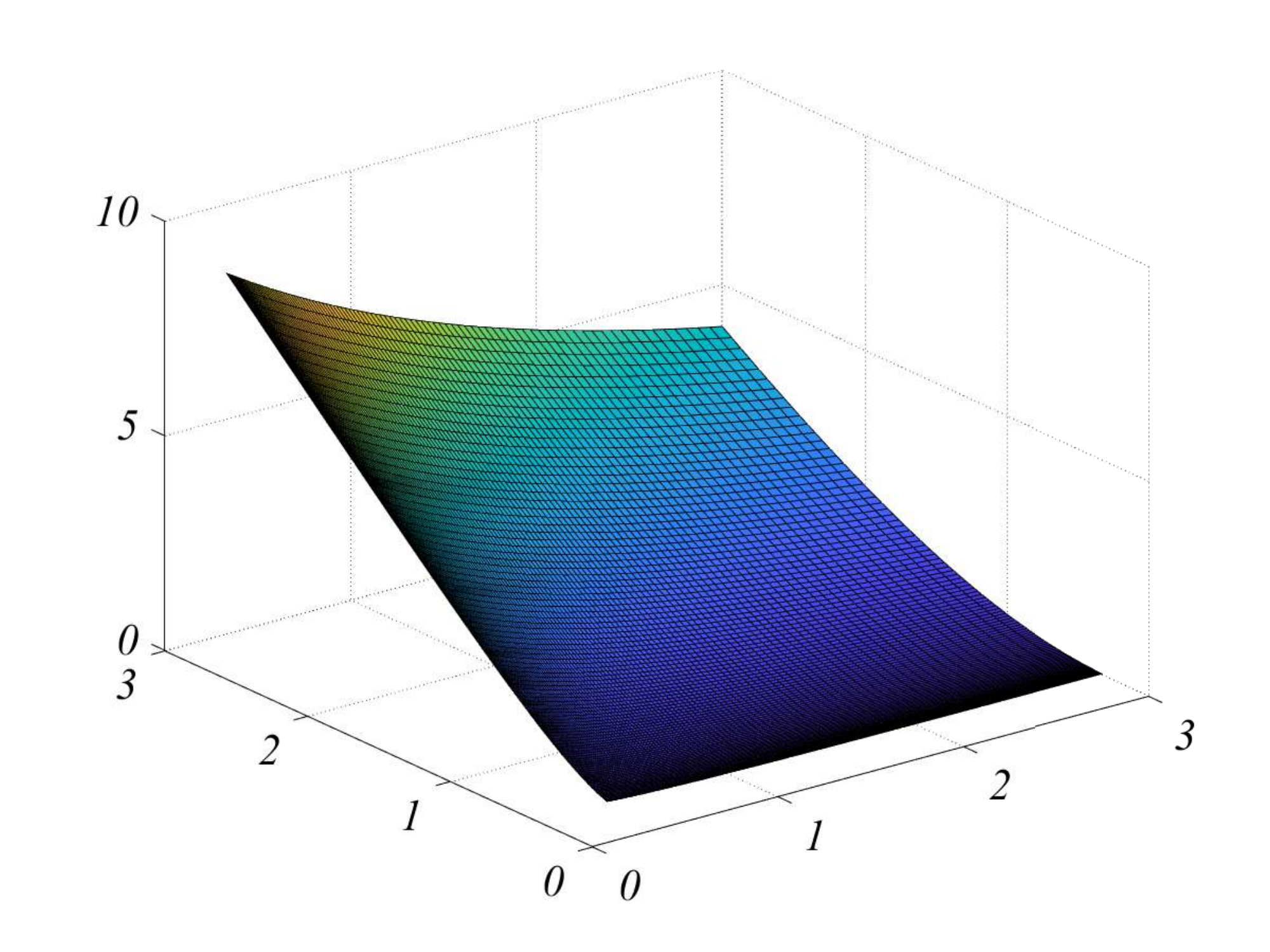}
    \caption{Reference solution $U^{N_r}$ with $N_r=10$. Time to maturity in years }
    \label{fig:sol}
    \end{minipage}
    \hspace{1cm}
    \begin{minipage}[b]{0.4\textwidth}
    \includegraphics[trim={0.9cm -2cm 1.9cm 1cm},clip,scale=.37]{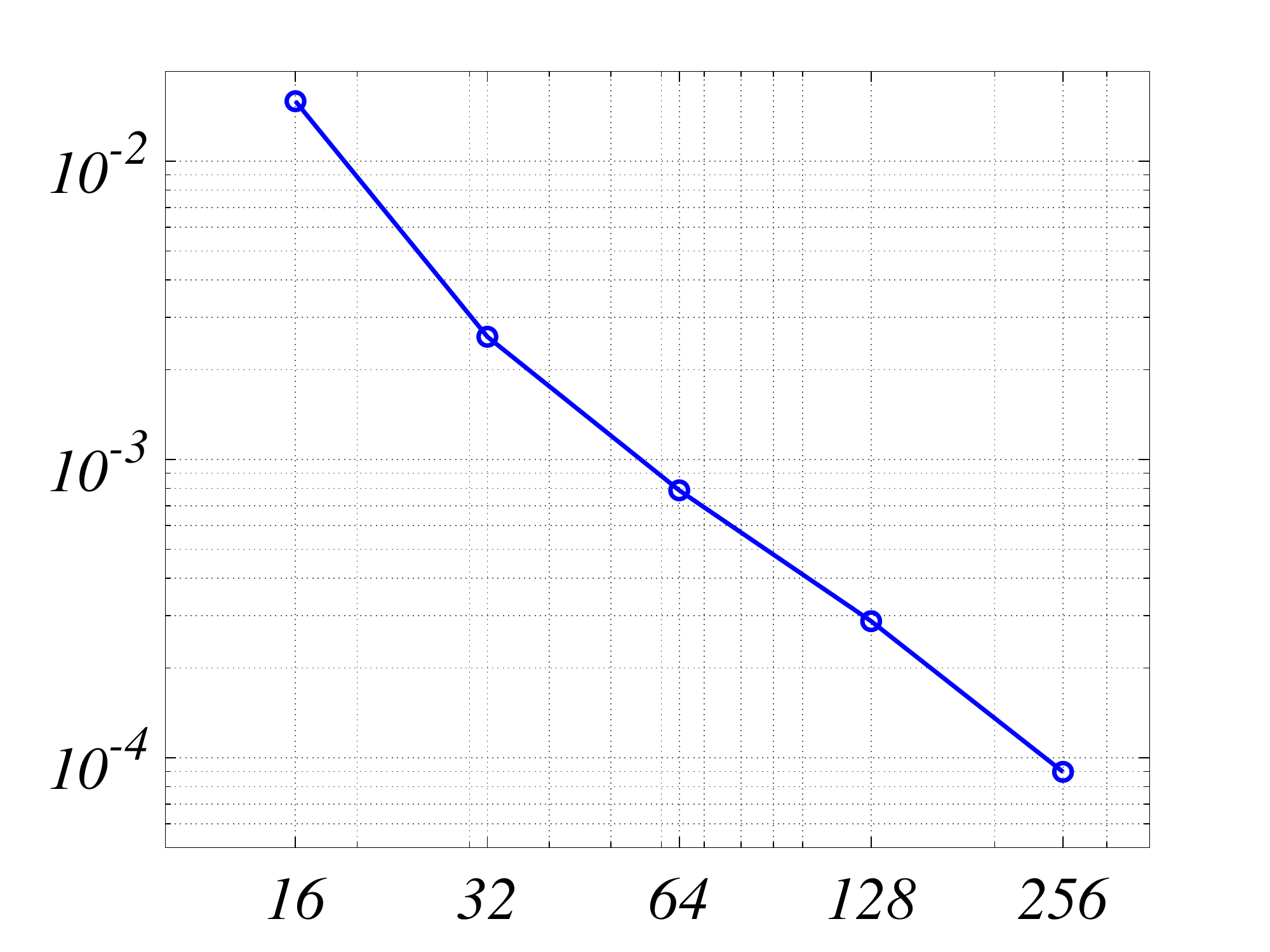}
    \caption{Log-log graphic of errors $\|U^N - U^{N_r}\|_{L^2}$ for $N=4,5,6,7,8$.}
    \label{fig:loglogMerton}
    \end{minipage}
\end{figure}

In  Table \ref{Tab:Table1} we can see the performance of the
algorithm at each grid of norm $h=L/2^N$. The first column
reflects the mesh step size. The second column  shows the magnitude of the error in  $L^2$-norm relative to the reference solution, while the third column provides the average number of iterations for time step of BICGSTAB. Finally, the fourth column shows the running time (in seconds) until a complete set of solutions $C(t,x)$ for values of $ t \in [0,T]$ in the mesh.
\begin{table}[ht]
    \centering
    \begin{tabular}{ |c||c||c||c|  }
     \hline
     \multicolumn{4}{|c|}{Performance} \\
     \hline
    $N$ & $\frac{\|U^{N_r} - U^N \|_{L^2}}{\|U^{N_r}\|_{L^2}}$ &  $\#$ it. by time step & time (sec.)  \\
     \hline
     $4$   & $1.5888  \cdot 10^{-2}$  & $5.8$   &   $0.6$    \\
     $5$  & $2.5788  \cdot 10^{-3}$   & $5.1$   &   $2.1$   \\
     $6$  & $7.8801  \cdot 10^{-4}$  & $4.2$   &   $13.5$  \\
     $7$  & $2.8699  \cdot 10^{-4}$  & $3.5$   &   $52.2$  \\
     $8$  & $8.9660  \cdot 10^{-5}$  & $2.8$   &   $241.8$ \\
     \hline
\end{tabular}
\caption{Performance of the algorithm at each grid of norm $h=L/2^N$. The second column: magnitude of the error in norm $\|\cdot\|_{L^2}$. Third column: Average number of iterations for time step of BICGSTAB. Fourth column: Running time in seconds.}
\label{Tab:Table1}
\end{table}
Figure \ref{fig:sol} shows the graphic of the reference solution $U^{N_r}$ with $N_r=10$ and Figure \ref{fig:loglogMerton} shows the decreasing behavior of the error in norm $L^2$ of the approximated solution $U^N$
with respect to the reference solution $U^{N_r}$ through the mesh refinement.

The $c^{(0)}_{FF}$ pre-conditioner performs very well since the number of iterations for time step does not grow through the mesh refinement, on the contrary, it seems to asymptotically decrease when $h\rightarrow 0$ which is a very desirable feature. The convergence rate was $1.8106$, very close to the  theoretical second order of the Crank-Nicholson scheme. The computational time grows linearly with the mesh refinement.

\subsection{Time-change model with Gamma subordinator implementation}
We set the interest rate to $r=0.02$ and  the loading parameters to $d_1=d_2=1$. Also, we consider the case $\mu_1= \mu_j$ to simplify calculations. Notice that  the remaining parameters to be estimated come from the three subordinators, the drift  parameters $\mu_1$ and $\mu_2$ and the volatility parameters $\sigma_{1,r}$ and $ \sigma_{2,r}$.

Once again we implement a GMM approach letting to a least square constrained minimization problem, this time in two steps. First by  matching the first four empirical and theoretical moments for the WTI asset prices taking into account risk neutral constraints, then matching the correlation to compute $/mu_2$ and the first two moments of the RBOB asset to compute the remaining parameters. The first matching is given by the non-linear equations
 \begin{eqnarray*}
  m^{(j)}_1  &=& \mu_j E_{j,1} \\
    m^{(j)}_2 &=& \mu^2_j E_{j,2}+ \sigma^2_{r,j} E_{j,1}\\ 
   m^{(j)}_3 &=& \mu^3_j E_{j,3}+ 3 \mu_j \sigma^2_{r,j}E_{j,2}\\ 
    m^{(j)}_4 &=& \mu^4_j E_{j,4}+6 \mu^2_j \sigma^2_{r,j}E_{j,3} + 3 \sigma^4_{r,j} E_{j,2} \\
   m_{12} &=& \mu_1  \mu_2 E[(\Delta L^{(0)}_t)^2],
 \end{eqnarray*}
where $m^{(j)}_k,\; j=1,2\;\; k=1,2,3,4$ are the empirical moments of log-returns  of both commodities and $m_{12}$   is its empirical mixed moment. In addition  the risk-neutral frame imposes the additional constraints
    \begin{eqnarray} \nonumber
    && -\alpha_0 \log \left( 1-\frac{\mu_j}{\beta_0}-\frac{\sigma^2_{r,j}}{2 \beta_0} \right)
 -\alpha_j \log \left( 1-\frac{\mu_j d_j}{\beta_j}-\frac{\sigma^2_{r,j} }{2 \beta_j} \right)=r,\;\;\ 
\end{eqnarray}
and the inequality constraint $0 <  \mu_j+\frac{\sigma^2_{r,j} }{2} < min(\beta_0,\beta_j) $. It leads to a  system of six variables with six equations. For convenience we re-parametrize it to new variables $x_1=\frac{\alpha_0}{\beta_0}, x_2=\frac{\alpha_j}{\beta_j}, x_3=\beta^{-1}_0, x_4=\beta^{-1}_j, x_5=\mu_j, x_6=\sigma^2_{j,r}$. Results of the parameter estimation can be viewed in table \ref{Tab:Table5}.
\begin{table}[ht]
    \centering
    \begin{tabular}{ |c||c|  }
     \hline
     %\multicolumn{2}{|c|}{} \\
    Parameters & Value \\ \hline   
  Interest rate & $ r=0.02$ \\ \hline 
  WTI subordinator  & $\alpha_1=0.7, \beta_1=0.7 $ \\ \hline 
 RBOB subordinator & $\alpha_2=0.8, \beta_2=0.8 $ \\ \hline 
 Common subordinator & $\alpha_0=0.5   , \beta_0=0.5 $ \\ \hline
  Loading parameters   & $d_1=1, d_2=1$ \\ \hline
parameter drift & $\mu_1=-0.0673, \mu_2=-0.050701$\\ \hline
parameter Brownian volatility & $\sigma_{r,1}=0.4633, \sigma_{r,2}=0.2236$ \\ \hline
Initial prices & $S_0^{(1)}=100, S_0^{(2)}=2$ \\ \hline
\end{tabular}
\caption{Parameters in the time-changed two dimensional model with Gamma subordinator}
\label{Tab:Table5}
\end{table}
Multiple initial values have been tested to avoid local minima. 

Same as in the previous example, we  we run the scheme \eqref{eq:thetasheme} with $\theta = 1/2$ to obtain approximated solutions $U^N$ in 
meshes $\mathcal{T}^h(\Omega_b)$ of norm $h=L/2^N$ 
for $N=4,5,6,7,8$.
Errors in norm $\|\cdot\|_{L^2}$ are computed using a reference solution $U^{N_r}$ where $N_r=9$. The BICGSTAB method was set run until find a solution with tolerance to the residual relative error of $10^{-10}$. Values for both $S_1$ and $c\cdot S_2$ are varying between $0$ and $3$ dollars, so the effective domain is $\Omega = \left[ -1.1 , 1.1\right]^2 \approx \left[log(0.3),log(3)\right]^2$ while boundary conditions (identical to the initial condition at each time) are imposed outside the outer domain $\Omega_b = \left[ -4, -4\right]^2$ to attenuate boundary error propagation to the interior of $\Omega$.

The graphic of the reference solution $U^{N_r}$ with $N_r=10$ is shown in Figure \ref{fig:solGanma} while the graphic in Figure \ref{fig:loglogGanma} shows the decreasing behavior of the error in norm $L^2$ of the approximated solution $U^N$
with respect to the reference solution $U^{N_r}$ through the mesh refinement. In  Table \ref{Tab:TableGamma} we can see the performance of the algorithm at each grid of norm $h=L/2^N$. The first column reflects the mesh step size. The second column  shows the magnitude of the error in  $L^2$-norm relative to the reference solution, while the third column provides the average number of iterations for time step of BICGSTAB. The fourth column shows the running time (in seconds) until a complete set of solutions $C(t,x)$ for values
of $ t \in [0,T]$ in the mesh.
\begin{figure}[ht]
    \begin{minipage}[b]{0.5\textwidth} 
    \includegraphics[trim={1cm 0cm 1cm 1cm},clip,scale=.45]{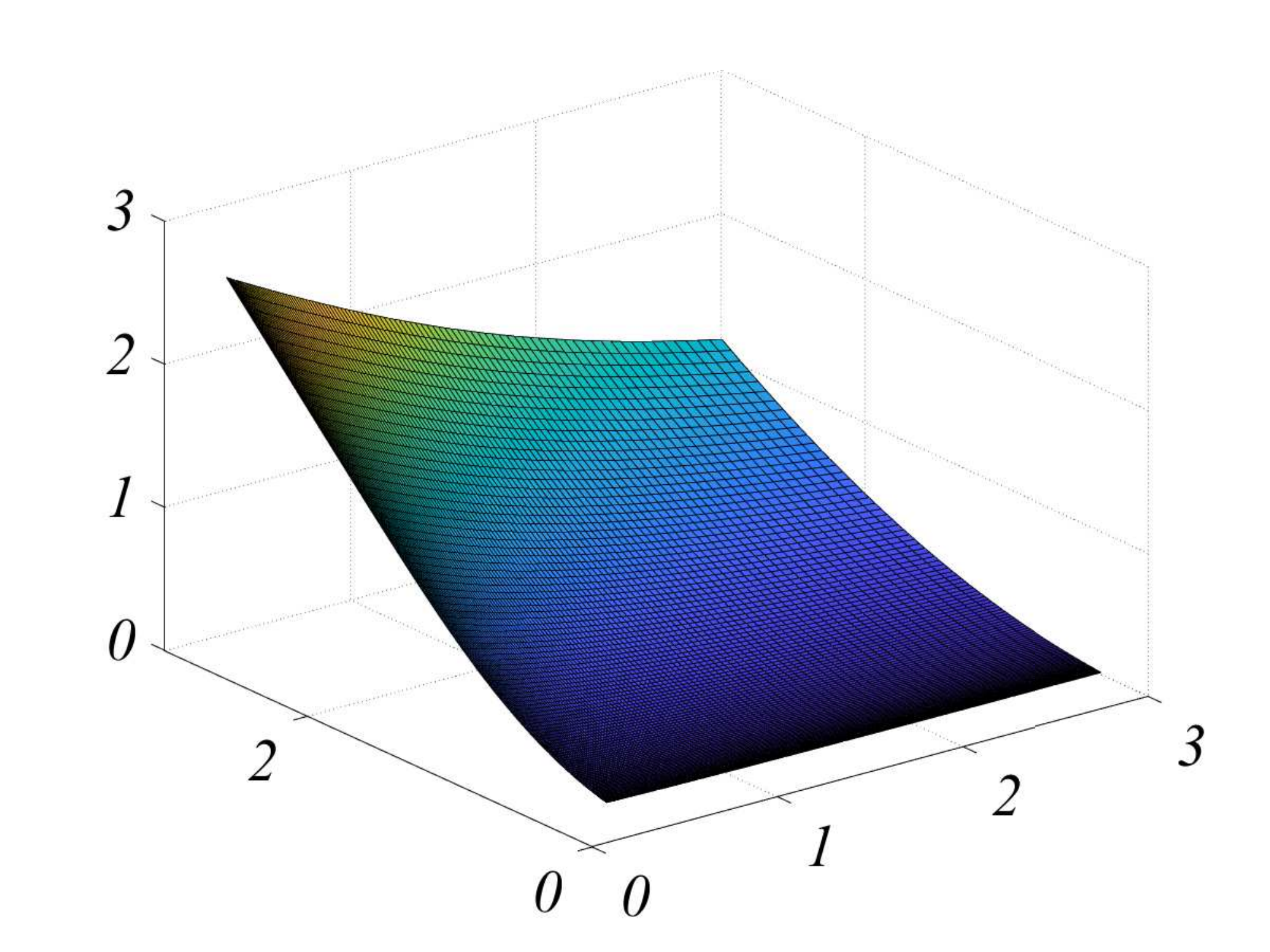}
    \caption{Reference solution $U^{N_r}$ with $N_r=10$. Time to maturity in years }
    \label{fig:solGanma}
    \end{minipage}
    \hspace{1cm}
    \begin{minipage}[b]{0.4\textwidth}
    \includegraphics[trim={0.9cm -2cm 1.9cm 1cm},clip,scale=.37]{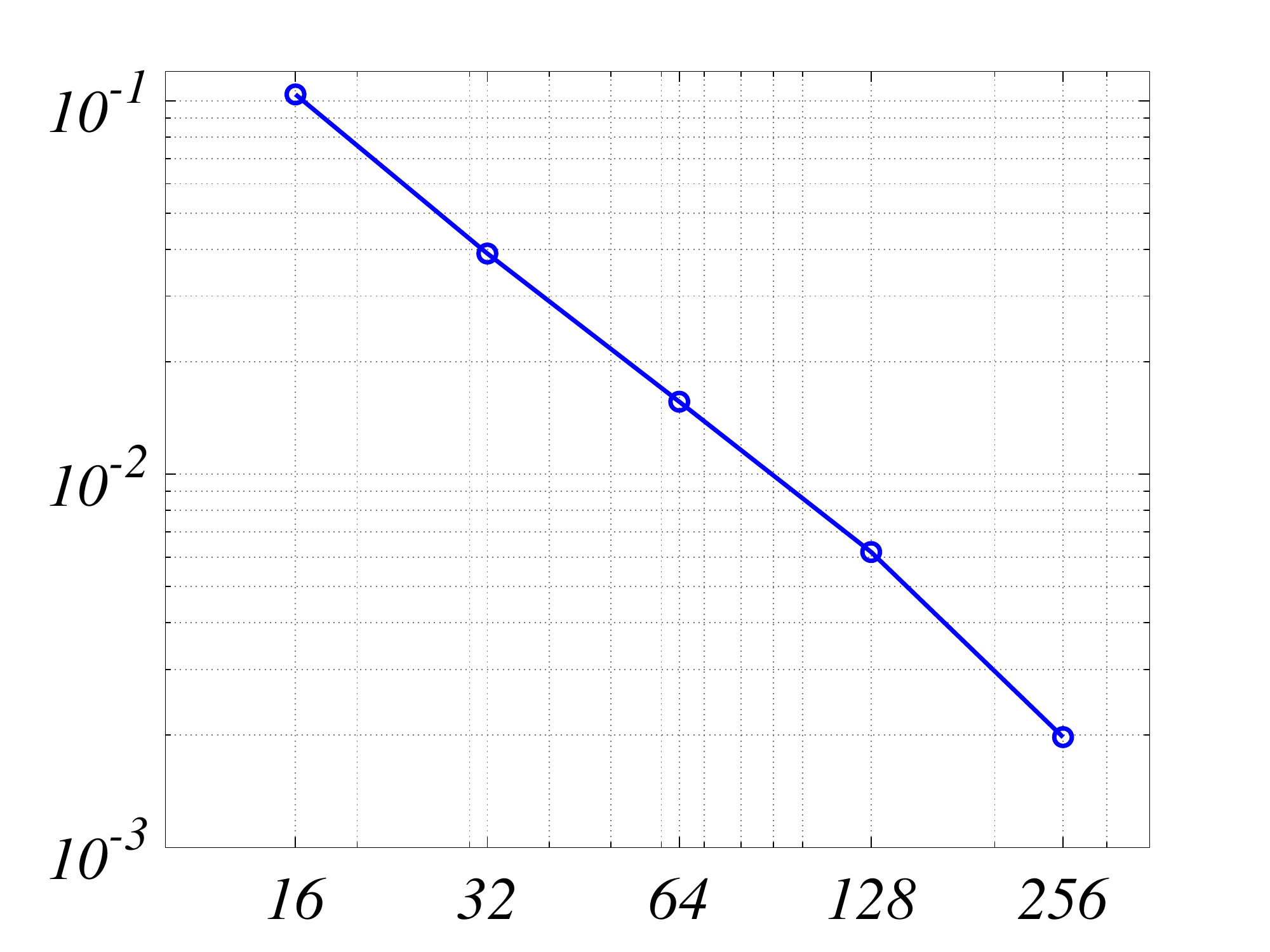}
    \caption{Log-log graphic of errors $\|U^N - U^{N_r}\|_{L^2}$ for $N=4,5,6,7,8$.}
    \label{fig:loglogGanma}
    \end{minipage}
\end{figure}
\begin{table}[ht]
    \centering
    \begin{tabular}{ |c||c||c||c|  }
     \hline
     \multicolumn{4}{|c|}{Performance} \\
     \hline
    $N$ & $\frac{\|U^{N_r} - U^N \|_{L^2}}{\|U^{N_r}\|_{L^2}}$ &  $\#$ it. by time step & time (sec.)  \\
     \hline
     $4$   & $1.0412  \cdot 10^{-1}$  & $5.4$   &   $0.5$    \\
     $5$  & $3.9017  \cdot 10^{-2}$   & $4.9$   &   $1.8$   \\
     $6$  & $1.5644  \cdot 10^{-2}$  & $4.0$   &   $12.9$  \\
     $7$  & $6.1894  \cdot 10^{-3}$  & $3.2$   &   $49.3$  \\
     $8$  & $1.9759  \cdot 10^{-3}$  & $2.4$   &   $233.2$  \\
     \hline
\end{tabular}
\caption{Performance of the algorithm at each grid of norm $h=L/2^N$. The second column: magnitude of the error in norm $\|\cdot\|_{L^2}$. Third column: Average number of iterations for time step of BICGSTAB. Fourth column: Running time in seconds.}
\label{Tab:TableGamma}
\end{table}

Again, the $c^{(0)}_{FF}$ pre-conditioner performs very well exhibiting an average value of iterations for time step that not only doesn't grow through the mesh refinement but on the contrary, it seams to asymptotically decreases when $h\rightarrow 0$ which is a very desirable feature. The convergence rate was $1.47$, a little bit lower than the  theoretical second order of the Crank-Nicholson scheme.

\section{Conclusions}

The proposed strategy  proved to be efficient for the valuation of spread contracts through the resolution of the associated PIDE. The symbol method allowed the efficient construction of the stiffness matrix, based on the use of the FFT, while the iterative method BICGSTAB for the solution of non-symmetric linear systems accompanied by the circular preconditioner facilitated the implementation of an implicit scheme for the temporal evolution by allowing the efficient resolution of the BTTB system associated to the Galerkin discretization. 

This strategy can  be extended to three dimensions although we must bear in mind that the computational cost grows exponentially with the dimensionality. Other terminal conditions could be considered using this strategy as well as barrier problems.  

Unfortunately, the symbol method can not be directly used with most classic finite element approximation spaces since their basis are not smooth enough to efficiently access their symbols through FFT, for example $Q^r$ spaces has polynomial interpolation basis are in $H^1_0(\Omega)$. 
This issue can be addressed by substituting the basis for
a mollified version of it as done in \cite[Section 4.4.1]{gass2018flexible}.
Even if we do that, in most of the cases the basis are obtained through translations of more than one parent function, which leads us to the resolution of a linear  system with a blocked matrix where each block is BTTB matrix. The system is also numerically tractable by considering an approach as in the present work but implementation becomes more complicated.

\section{Acknowledgements}
This research has been funded by NSERC and Fields Institute.
%\section{Ideas and tasks}

%Problems with non-homogeneous coefficients, where the symbol of the operator is not available are easy to discretize using Finite %Elements (That could be the case of...). 

\appendix
\section{Appendix}

\subsection{Parameter choice for spread options}
\label{A:parameter}
Let us consider a spread option. The payoff has the form
\begin{equation}
    h(x,y) = \left(a_2\text{e}^y-a_1\text{e}^x-K \right)^+
\end{equation}
where $a_1$ and $a_2$ are positive constants and the function
$(r)^x$ is the maximum between zero and $r$. We want to know
what the values of $\eta$ should be for $h\in L^1_\eta(\mathbb{R}^2)$ and $h\in L^2_\eta(\mathbb{R}^2)$. 
% The shape of the price and the log-price function $h$ for European spread options is shown in Figures \ref{fig:AmeCall1}  and \ref{fig:AmeCall2}

There is a function $y:\mathbb{R}\rightarrow\mathbb{R}$  that separates the plane in two sets, the points at which $h$ is greater than zero and the points at which $h$ is zero. Such a curve is as follows
\begin{equation*}
y(x) = log(\frac{a_1\text{e}^x + K}{a_2})
\end{equation*}
So we can write
\begin{align*}
    \|h\|_{L^1_\eta(\mathbb{R}^2)} &= \int_{-\infty}^\infty\int^\infty_{y(x)} |a_2\text{e}^y-a_1\text{e}^x-K| \text{e}^{\eta_1x}\text{e}^{\eta_2y}\text{d}y\text{d}x\\
   &\leq \int_{-\infty}^\infty\int^\infty_{y(x)} a_2\text{e}^{(1+\eta_2)y}\text{e}^{\eta_1x}\text{d}y\text{d}x+
   \int_{-\infty}^\infty\int^\infty_{y(x)} a_1\text{e}^{(1+\eta_1)x}\text{e}^{\eta_2y}\text{d}y\text{d}x+
   \int_{-\infty}^\infty\int^\infty_{y(x)} K\text{e}^{\eta_1x}\text{e}^{\eta_2y}\text{d}y\text{d}x\\ 
   &\leq a_2\int_{-\infty}^\infty\int^\infty_{y(x)} \text{e}^{(1+\eta_2)y}\text{e}^{\eta_1x}\text{d}y\text{d}x+
   a_1\int_{-\infty}^\infty\int^\infty_{y(x)} \text{e}^{(1+\eta_1)x}\text{e}^{\eta_2y}\text{d}y\text{d}x+
   K\int_{-\infty}^\infty\int^\infty_{y(x)} \text{e}^{\eta_1x}\text{e}^{\eta_2y}\text{d}y\text{d}x\\
   &\leq a_2\int_{-\infty}^\infty\text{e}^{\eta_1x}\int^\infty_{y(x)} \text{e}^{(1+\eta_2)y}\text{d}y\text{d}x+
   a_1\int_{-\infty}^\infty \text{e}^{(1+\eta_1)x}\int^\infty_{y(x)} \text{e}^{\eta_2y}\text{d}y\text{d}x+
   K\int_{-\infty}^\infty\text{e}^{\eta_1x}\int^\infty_{y(x)} \text{e}^{\eta_2y}\text{d}y\text{d}x
\end{align*}
Imposing $\eta_2 < -1$ all seconds integrals exist as a function of $x$. Note that $(-1-\eta_2)$ and $-\eta_2$ are both positive quantities so the following integrals remain positive.
\begin{align*}
&\leq a_2\int_{-\infty}^\infty\text{e}^{\eta_1x} \frac{\text{e}^{(1+\eta_2)y(x)}}{(-1-\eta_2)} \text{d}x+
a_1\int_{-\infty}^\infty \text{e}^{(1+\eta_1)x} \frac{\text{e}^{\eta_2y(x)}}{-\eta_2}\text{d}x +
K\int_{-\infty}^\infty\text{e}^{\eta_1x} \frac{\text{e}^{\eta_2y(x)}}{-\eta_2}\text{d}x\\
&\leq \frac{a_2}{(-1-\eta_2)}\int_{-\infty}^\infty\text{e}^{\eta_1x} \left(\frac{a_1\text{e}^x + K}{a_2}\right)^{1+\eta_2} \text{d}x+
\frac{a_1}{-\eta_2}\int_{-\infty}^\infty \text{e}^{(1+\eta_1)x} \left(\frac{a_1\text{e}^x + K}{a_2}\right)^{\eta_2}\text{d}x +\\
& \quad\quad\quad\quad\quad\quad\quad\quad\quad\quad\quad\quad+ \frac{K}{-\eta_2}\int_{-\infty}^\infty\text{e}^{\eta_1x} \left(\frac{a_1\text{e}^x + K}{a_2}\right)^{\eta_2}\text{d}x\\
&\leq \frac{a_2^{-\eta_2}}{(-1-\eta_2)}\int_{-\infty}^\infty \left(a_1\text{e}^{A_1x} + K\text{e}^{A_2x}\right)^{1+\eta_2} \text{d}x+
\frac{a_1a_2^{-\eta_2}}{-\eta_2}\int_{-\infty}^\infty  \left(a_1\text{e}^{B_1x} + K\text{e}^{B_2x}\right)^{\eta_2}\text{d}x +\\
& \quad\quad\quad\quad\quad\quad\quad\quad\quad\quad\quad\quad +\frac{K a_2^{-\eta_2}}{-\eta_2}\int_{-\infty}^\infty\left(a_1\text{e}^{C_1x} + K\text{e}^{C_2x}\right)^{\eta_2}\text{d}x,
\end{align*}
where
\begin{align*}
   A_1=1+\eta_1/(1+\eta_2) &,& &A_2=\eta_1/(1+\eta_2),& &B_1=1+ (1+\eta_1)/\eta_2, && B_2=(1+\eta_1)/\eta_2 \\
   && &C_1=1+\eta_1/\eta_2,& &C_2=\eta_1/\eta_2 
\end{align*}
In order the integrand to have exponential decay the pairs
$(A_1,A_2)$, $(B_1,B_2)$ and $(C_1,C_2)$ need to alternate signs which immediately means that $\eta_1$ has to be greater than zero because if not all signs are negative. Now, setting $\eta_1>0$ we have $A_2<0$, $B_2<0$ and $C_2<0$ and we need to ask for $A_1>0$, $B_1>0$ and $C_1>0$ which lead us to $\eta_1<-\eta_2-1$. Putting all together we need to impose
$\eta\in(0,a-1)\times(-\infty,-a)$ for any $a>1$ in order $h\in L^1_\eta(\mathbb{R}^2)$. If we repeat the process for $\|\cdot\|_{L^2_\eta(\mathbb{R}^2)}$ we get the same therms as before (up to a constant) and three new terms from $\left|a_2\text{e}^y-a_1\text{e}^x-K\right|^2\leq a_1^2\text{e}^{2y}+a_2^2\text{e}^{2x} + 2a_1a_2\text{e}^{x}\text{e}^{y}+\dots$ so we need the following integrals to exist
\begin{align}
\int_{-\infty}^\infty\text{e}^{\eta_1x}\int^\infty_{y(x)} \text{e}^{(2+\eta_2)y}\text{d}y\text{d}x&, 
&\int_{-\infty}^\infty \text{e}^{(2+\eta_1)x}\int^\infty_{y(x)} \text{e}^{\eta_2y}\text{d}y\text{d}x&,
&\int_{-\infty}^\infty\text{e}^{(1+\eta_1)x}\int^\infty_{y(x)} \text{e}^{(1+\eta_2)y}\text{d}y\text{d}x
\label{eq:integrals}
\end{align}
using the same arguments as for $L^1_\eta(\mathbb{R}^2)$, we have from the first integral in \eqref{eq:integrals} that $\eta_2<-2$. We obtain another three pairs $(D_1,D_2)$, $(E_1,E_2)$ and $(F_1,F_2)$ that need to alternate signs, where 
\begin{align*}
   D_1=1+\eta_1/(2+\eta_2) &,& &D_2=\eta_1/(2+\eta_2),& &E_1=1+ (2+\eta_1)/\eta_2, && E_2=(2+\eta_1)/\eta_2 \\
   && &F_1=1+(1+\eta_1)/(1+\eta_2),& &F_2=(1+\eta_1)/(1+\eta_2).
\end{align*}
The choice $ \eta\in(0,a-2)\times(-\infty,-a)$ for any $a>2$ warranties that $h\in L^2_\eta(\mathbb{R}^2)$ in fact, it also warranties that $h\in L^1_\eta(\mathbb{R}^2)\cap L^2_\eta(\mathbb{R}^2)$.

\subsection{Continuity}
\label{A:Continuity}

Let us denote by $m$ and $M$ the minimum and maximum eigenvalues of $\Sigma_B$ respectively, from \eqref{eq:chfcppm} we have that
\begin{align*}
  \left| A(u) \right| &= \left| i u b^{\mathcal{Q}} + \frac{1}{2}u \Sigma_B u^T +  \sum_{j=1}^2 \lambda_j (\varphi_{ X^{(j)}}(u_j)-1) + \lambda_0 (\varphi_{X_0}(u)-1) \right| \\
  & \leq  \left|b^{\mathcal{Q}}\right|\left| u \right|+ \frac{1}{2}M\left| u \right|^2 + 2\left(\lambda_0+\lambda_1+\lambda_2 \right)\\
  & \leq \max\bigg\lbrace\left|b^{\mathcal{Q}}\right| \,,\,\frac{1}{4}M \,,\, 2\left(\lambda_0+\lambda_1+\lambda_2 \right)\bigg\rbrace \left(1+2|u| +|u|^2\right)\\
  &\leq C_1(1+|u|)^2,
\end{align*}
and (A2) is satisfied for $\alpha=2$ equal to the maximum in the thirst line.

\subsection{G{\aa}rding condition}
\label{A:Garding}

The \textit{G{\aa}rding condition} (A3) is also satisfied for Example \ref{Ex:1}. 
\begin{align*}
    \mathbb{R}(A(u)) &= \frac{1}{2}u \Sigma_B u^T + \sum_{j=1}^2\lambda_j\left(\text{e}^{-\frac{1}{2}  (\sigma_J^{(j)})^2 u^2_j}
    \cos (\mu^{(j)}_J u_j)-1\right) + \lambda_0\left(\text{e}^{-\frac{1}{2}u \Sigma_{0,J} u^T} \cos  (\mu_{0,J}\cdot u^T) -1 \right)\\
    &\geq \frac{1}{2}m|u|^2 - 2\left(\lambda_0 + \lambda_1 + \lambda_2 \right)\\
    &\geq \left(\sqrt{2m}|u| +  \frac{1}{2}m\right) - \left(\sqrt{2m}|u| +  \frac{1}{2}m\right) + \frac{1}{2}m|u|^2 - 2\left(\lambda_0 + \lambda_1 + \lambda_2 \right)\\
    &\geq \frac{1}{2}m (1+2|u|+|u|^2) - \max\bigg\lbrace2\left(\lambda_0 + \lambda_1 + \lambda_2 \right) + \frac{1}{2}m \,,\,  \sqrt{2m}\bigg\rbrace (1+|u|)\\
    &\geq C_2 (1+|u|)^2 - C_3 (1+|u|)^1,
\end{align*}
and (A3) is satisfied for index $\alpha = 2$ and $\beta =1$.

\bibliography{pidespreads1} 
\bibliographystyle{ieeetr}

\end{document}